\newtheorem{theorem}{Theorem}[section]
\newtheorem{proposition}[theorem]{Proposition}
\newtheorem{lemma}[theorem]{Lemma}
\newtheorem{corollary}[theorem]{Corollary}
\theoremstyle{definition}
\newtheorem{remark}[theorem]{Remark}
\newtheorem{example}[theorem]{Example}
\newtheorem{dfn}[theorem]{Definition}
\newcommand{\ff}{F^{\times}}
\newcommand{\fs}{F^{\times 2}}
\newcommand{\cchar}{\mathrm{char}}
\newcommand{\id}{\mathrm{id}}
\newcommand{\invo}{\overline{\rule{2.5mm}{0mm}\rule{0mm}{4pt}}}
\newcommand{\Pic}{\operatorname{Pic}}
\newcommand{\Br}{\operatorname{Br}}
\newcommand{\Gal}{\operatorname{Gal}}
\newcommand{\gGO}{\operatorname{\mathbf{GO}}}
\newcommand{\gPGU}{\operatorname{\mathbf{PGU}}}
\newcommand{\gSpin}{\operatorname{\mathbf{Spin}}}
\newcommand{\gGm}{\operatorname{\mathbf{G}_m}}
\newcommand{\End}{\operatorname{End}}
\newcommand{\Nrd}{\operatorname{Nrd}}
\newcommand{\ad}{\operatorname{ad}}
\newcommand{\disc}{\operatorname{disc}}
\newcommand{\Hyp}{\operatorname{Hyp}}
\DeclareMathOperator{\gPGO}{\mathbf{PGO}}
\DeclareMathOperator{\GO}{GO}
\DeclareMathOperator{\PGO}{PGO}
\DeclareMathOperator{\gPGSp}{\mathbf{PGSp}}
\DeclareMathOperator{\GU}{GU}
\DeclareMathOperator{\PGU}{PGU}
\DeclareMathOperator{\gAut}{\mathbf{Aut}}
\newcommand{\Z}{\mathbb{Z}}
\newcommand{\wF}{{\widehat F}}
\newcommand{\wK}{\widehat K}
\newcommand{\wB}{\widehat B}
\newcommand{\wtau}{\widehat\tau}
\newcommand{\cO}{\mathcal O}
\title[Outer automorphisms and non-rational groups]{Outer
  automorphisms of adjoint groups of type $\mathsf D$ and 
  non-rational adjoint groups of outer type $\mathsf{A}$}
\keywords{Adjoint group, outer automorphism, involution, hermitian form, similitude, $R$-equivalence, stable rationality}
\subjclass[2010]{Primary 11E57; Secondary 20G15}
\date{June 3, 2018}
\author[D. Barry]{Demba Barry}
\author[J.-P. Tignol]{Jean-Pierre Tignol}
\address{Facult\'e des Sciences et Techniques de Bamako, BP: E3206 Bamako, Mali  and Departement Wiskunde--Informatica, Universiteit Antwerpen, Belgium}
\email{Barry.Demba@gmail.com}
\address{Universit\'e catholique de Louvain, ICTEAM Institute, Avenue G. Lema\^itre 4, Box L4.05.01, B-1348 Louvain-la-Neuve, Belgium.}
\email{Jean-Pierre.Tignol@uclouvain.be}
\thanks{The first author would like to thank the second author and UCL for their hospitality while the work for this paper was done. He gratefully acknowledges support from the FWO Odysseus Programme ({\it project Explicit Methods in Quadratic Form Theory}). The second author acknowledges support from the Fonds de la Recherche Scientifique--FNRS under grant n$^\circ$~J.0149.17.}
\begin{document}
\maketitle
\begin{abstract}  
 For a classical group $G$ of type $\mathsf D_n$ over a field $k$ of
 characteristic different from $2$, we show the existence of a
 finitely generated regular extension $R$ of $k$ such that $G$ admits
 outer automorphisms over $R$. Using this result and a construction of
 groups of type $\mathsf A$ from groups of type $\mathsf D$, we
 construct new examples of groups of type $^2\mathsf A_n$ with
 $n\equiv 3\bmod 4$ and the first examples of type $^2\mathsf A_n$
 with $n\equiv 1\bmod 4$  $(n\geq 5)$ that are  not $R$-trivial, hence
 not rational (nor stably rational). 
\end{abstract}
\section{Introduction}\label{section1}

Two questions concerning algebraic groups of classical type are
addressed in this paper: the existence of outer automorphisms of
adjoint groups of type $\mathsf D$ and the rationality of adjoint
groups of outer type $\mathsf A$. The two questions are related by a
construction of groups of type $\mathsf A$ from groups of type
$\mathsf D$ that we call unitary extension. 

To describe our contribution to the first topic, recall that
when a classical group $G$ of adjoint type $\mathsf{D}_n$ over a field
$k$ of characteristic different from~$2$ is represented as the
connected component of the identity $\gPGO^+(A,\sigma)$ in the group
of automorphisms of a central simple algebra with orthogonal
involution $(A,\sigma)$ of degree~$2n$, then outer automorphisms of
$G$ are induced by improper similitudes of $(A,\sigma)$, i.e.,
elements $g\in A$ such that
\[
\sigma(g)g\in k^\times \quad\text{and}\quad
\Nrd_A(g)=-(\sigma(g)g)^n,
\]
see \cite[Prop.~2.5]{QMT1}. The existence of an improper similitude is
a serious 
constraint on $A$ and $\sigma$: the algebra must be split by the
quadratic extension given by the discriminant of $\sigma$ (see
\cite[(13.38)]{KMRT98}), hence its index is at most~$2$; and if the
discriminant is trivial then $A$ must be split.
Nevertheless, we show:

\begin{proposition}
  \label{prop:improper}
  Let $\sigma$ be an orthogonal involution on a central simple algebra
  $A$ of degree~$2n$ over a field $k$ of characteristic different
  from~$2$. If $A$ is not split and the discriminant of $\sigma$ is
  not trivial, there exists a finitely generated regular extension $R$
  of $k$ such that 
  the algebra with involution $(A_R,\sigma_R)$ obtained from
  $(A,\sigma)$ by scalar extension to $R$ admits improper similitudes
  and $A_R$ is not split.
\end{proposition}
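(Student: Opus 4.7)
The approach is to reduce to the case where $A$ is Brauer-equivalent to a non-split quaternion algebra and then construct $R$ as the function field of a Weil restriction; the main obstacle is verifying that $A_R$ remains non-split.

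\emph{Step 1 (Reduction to $\ind A = 2$).} Since $\sigma$ is orthogonal, $\ind A$ is a power of~$2$. If $\ind A \geq 4$ I replace $k$ by the function field of the generalized Severi--Brauer variety $\SB_2(A)$, a smooth geometrically integral $k$-variety. Over this finitely generated regular extension, $A$ has index exactly~$2$, stays non-split, and $\disc\sigma$ remains non-trivial (as $k$ is algebraically closed in $k(\SB_2(A))$). So I may assume $A \sim Q$ for a non-split quaternion $k$-algebra $Q$.

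\emph{Step 2 (Construction of $R$).} Set $K := k(\sqrt{\disc\sigma})$. If $Q_K$ is split, \cite[(13.38)]{KMRT98} already yields improper similitudes over $k$ and $R := k$ works. Otherwise $V := \SB(Q_K)$ is a non-trivial $K$-conic, and I let $W := R_{K/k}(V)$ be its Weil restriction: a smooth projective $k$-surface with $W_{\bar k} \cong \P^1 \times \P^1$, hence geometrically integral. Set $R := k(W)$, a finitely generated regular extension of $k$. By the universal property of Weil restriction, $W(R) = V(R \otimes_k K)$, so the tautological generic point of $W$ furnishes an $(R \otimes_k K)$-point of $V$; that is, $Q_{R \otimes_k K}$ is split. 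Since $\disc\sigma$ remains a non-square in $R$ by regularity of $R/k$, the ring $R \otimes_k K$ coincides with the discriminant extension $R(\sqrt{\disc\sigma_R})$ of $\sigma_R$, and \cite[(13.38)]{KMRT98} supplies the required improper similitudes of $(A_R,\sigma_R)$.

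\emph{Step 3 (The crux: $A_R$ is not split).} It remains to verify $[Q] \neq 0$ in $\Br(R)$. I plan to do so via the Hochschild--Serre low-degree exact sequence
\[
0 \to \Pic(W) \to \Pic(W_{\bar k})^{\Gamma_k} \xrightarrow{\partial} \Br(k) \to \Br(W),
\]
combined with the injection $\Br(W) \hookrightarrow \Br(k(W)) = \Br(R)$ coming from smoothness of $W$. Here $\Pic(W_{\bar k}) = \Pic(\P^1 \times \P^1) = \Z \oplus \Z$, on which $\Gamma_k$ acts through $\Gal(K/k)$ by swapping the two factors, so the invariants form the diagonal $\Z \cdot \mathcal O(1,1)$. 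By functoriality of Hochschild--Serre under Weil restriction, $\partial(\mathcal O(1,1))$ equals $\cor_{K/k}$ applied to the corresponding edge-map value for $V$; the latter, by the classical Severi--Brauer computation, is $[Q_K] \in \Br(K)$. Since $[Q]$ has exponent~$2$, $\cor_{K/k}[Q_K] = 2[Q] = 0$ in $\Br(k)$, whence $\partial = 0$, $\Br(k) \hookrightarrow \Br(W)$, and $[Q]$ persists in $\Br(R)$. The key technical input, and the main obstacle, is the corestriction-compatibility of the edge map under the Weil restriction $R_{K/k}$; the rest of the argument is a packaging of \cite[(13.38)]{KMRT98} and the universal property of Weil restriction.
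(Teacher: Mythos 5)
Your approach is genuinely different from the paper's, and it contains a fundamental gap in Step~2.

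The paper takes $R$ to be the function field of the non-identity component $X = \gPGO^-(A,\sigma)$, a torsor under $\gPGO^+(A,\sigma)$. With that choice the existence of improper similitudes over $R=k(X)$ is \emph{tautological}: $X$ always has a rational point over its own function field. All the work then goes into showing $\Br(k)\to\Br(k(X))$ is injective, which the paper does via Rosenlicht and Sansuc's exact sequence.

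Your $R = k(W)$ is chosen to make $A$ split over $R(\sqrt{\disc\sigma})$, and you then invoke \cite[(13.38)]{KMRT98} to conclude that $(A_R,\sigma_R)$ has improper similitudes. This is where the argument breaks. What (13.38) (equivalently \cite[Th.~A]{MT95}) says is: for a similitude $g$, the Brauer class $[(\disc\sigma,\mu(g))_k]$ is $0$ if $g$ is proper and $[A]$ if $g$ is improper. This is a \emph{classification of similitudes that already exist}, not an existence theorem. From it one deduces only that ``$A$ is split by $k(\sqrt{\disc\sigma})$'' is \emph{necessary} for improper similitudes; the sufficient condition requires that $[A]$ lie in the set $\{(\disc\sigma,\mu)_k : \mu\in G(A,\sigma)\}$, i.e., that some \emph{multiplier} $\mu$ of a similitude satisfies $(\disc\sigma,\mu)=[A]$. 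Splitting $Q$ over $R\otimes_k K$ produces an element of the norm group of $R\otimes_k K/R$ that hits $[A_R]$ via $(\disc\sigma,-)$, but there is no reason for that element to be a multiplier of $\sigma_R$. (The same unjustified leap appears in the sentence ``If $Q_K$ is split, (13.38) already yields improper similitudes over $k$'' --- that too is false: the necessary condition can hold over $k$ without improper similitudes existing over $k$, which is exactly why the Proposition is not vacuous.)

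Step~1 is fine: $k(\SB_2(A))$ is a finitely generated regular extension over which the index drops to $2$ while $\disc\sigma$ stays non-trivial. In Step~3 the asserted formula ``$\partial(\cO(1,1)) = \cor_{K/k}[Q_K]$'' is not justified (the edge map in Hochschild--Serre does not simply corestrict through a Weil restriction), but the conclusion you need there can be salvaged: by compatibility of the low-degree sequences for $W/k$ and $W_K/K$ one checks $\res_{K/k}\partial_W(\cO(1,1)) = \partial_{W_K}(1,1) = 2[Q_K]=0$, so $\partial_W(\cO(1,1))$ is split by $K$, whereas $[Q]$ is not (we are in the case $Q_K$ non-split); hence $[Q]\notin\Im\partial_W = \Ker(\Br(k)\to\Br(W))$ and $A_R$ stays non-split. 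So Step~3 is repairable, but the proof as a whole fails because Step~2 never produces an improper similitude.
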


The proof\footnote{The authors are grateful to the referee for
  suggesting this line of proof, which significantly simplifies their
  initial approach, and to Merkurjev for advice on the proof of
  Proposition~\ref{prop:kerBr}.} shows that one can take for $R$ the
function field of the 
connected component of improper similitudes in the group of
automorphisms of $(A,\sigma)$: see Section~\ref{sec:improper}.
\medbreak

In the second part of the paper, we investigate the rationality
problem for the underlying variety of adjoint linear algebraic groups
of type $^2\mathsf{A}_n$. Voskresenski\u\i\ and Klyachko \cite[Cor.~of
Th.~8]{VK85} have shown that this variety is rational if $n$ is
even. By contrast, examples of adjoint groups of type $^2\mathsf{A}_n$
for $n\equiv3\bmod4$ that are not rational have been given by
Merkurjev \cite{Mer96} and by 
Berhuy--Monsurr\`o--Tignol \cite{BMT04}, using Manin's $R$-equivalence
and Merkurjev's computation of the group of $R$-equivalence classes
of adjoint classical groups \cite{Mer96}. On one hand we will use
Proposition~\ref{prop:improper} to expand the range of these examples,
and 
on another hand we will provide the first examples of adjoint groups
of type $^2\mathsf{A}_n$ with arbitrary $n\equiv1\bmod 4$ ($n\geq5$)
that are not $R$-trivial, hence not rational (nor stably
rational). These examples 
are based on the adjoint groups of type $\mathsf D$ that have outer
automorphisms but no outer automorphisms of order~$2$ found by
Qu\'eguiner-Mathieu and Tignol \cite{QMT1}.

To explain our construction in more detail, recall that
adjoint groups of outer type $\mathsf A$ over a field $F$ can be
represented as groups of automorphisms $\gPGU(B,\tau)$ of central
simple algebras with unitary involution $(B,\tau)$ over separable
quadratic field extensions $K/F$. We consider in particular the case
where $(B,\tau)$ is obtained from a central simple $F$-algebra with
orthogonal or symplectic involution $(A,\sigma)$ as
\[
(B,\tau)=(A,\sigma)\otimes_F(K,\iota),
\]
where $\iota$ is the nontrivial $F$-automorphism of $K$. We then say
$(B,\tau)$ is a \emph{unitary extension} of $(A,\sigma)$. In
Section~\ref{sec:unitext} we give a necessary and sufficient condition
for $(B,\tau)$ to be hyperbolic (excluding one exceptional case); see Theorem~\ref{thm:hypcond}.

Of special interest are \emph{generic} unitary extensions, where
$F=k(x)$ is a rational function field in one variable over a field $k$
of characteristic different from $2$, $K=F(\sqrt x)$, and $A$ is
defined over the field $k$ of constants. (Generic unitary extensions
are also used in \cite[Sec.~4.3]{QMT1}.) In
Section~\ref{sec:examples} we show:

\begin{theorem}
  \label{thm:A3}
  Let $(A,\sigma)$ be a central simple algebra with orthogonal
  involution of degree multiple of~$4$ over a field $k$ of
  characteristic~$0$. If $A$ is not split and the 
  discriminant of $\sigma$ is not trivial, then for the generic
  unitary extension $(B,\tau)$ of $(A,\sigma)$ the group
  $\gPGU(B,\tau)$ is not $R$-trivial, i.e., there exists a field
  extension $E$ of $F$ such that the group of $E$-rational points
  $\gPGU(B,\tau)(E)$ has more than one $R$-equivalence class.
\end{theorem}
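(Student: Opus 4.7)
The plan is to apply Proposition~\ref{prop:improper} to promote the improper-similitude hypothesis from $(A,\sigma)$ to a larger base field, and then use the specific structure of the generic unitary extension together with Merkurjev's description of $R$-equivalence classes to exhibit an explicit nontrivial class of $\gPGU(B,\tau)(E)$.

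First, I would apply Proposition~\ref{prop:improper} to $(A,\sigma)$ (which satisfies the hypotheses: $A$ non-split and $\disc\sigma$ non-trivial) to obtain a finitely generated regular extension $R/k$ and an improper similitude $g\in A_R^\times$ with multiplier $\lambda=\sigma_R(g)g\in R^\times$, $A_R$ still non-split. Set $E=R\otimes_k F=R(x)$, a regular extension of $F$, and $L=E\otimes_F K=E(\sqrt x)$. In the unitary extension $(B,\tau)_E=(A_E,\sigma_E)\otimes_E(L,\iota)$, the element $g\otimes 1$ is a unitary similitude with multiplier $\lambda$; write $[g]$ for its class in $\gPGU(B,\tau)(E)$. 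The task is to show that $[g]$ is not $R$-equivalent to the identity.

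Next I would invoke Merkurjev's computation \cite{Mer96} of the group of $R$-equivalence classes of adjoint classical groups, which expresses $\gPGU(B,\tau)(E)/R$ as a subquotient of $E^\times$ built from multipliers of unitary similitudes of $(B,\tau)_E$, modulo the subgroup generated by multipliers of hyperbolic similitudes, by norms from $L/E$, and by reduced norms from $B_E^\times$. Under this identification, $[g]$ maps to the class of $\lambda$. To show this class is non-trivial, I would pass to the discrete valuation at $x=0$, where $L/E$ ramifies and the residue field is $R$. The assumption $\deg A\equiv 0\bmod 4$ (so $\deg A=2n$ with $n$ even) combined with the improperness of $g$ forces $\Nrd_{A_R}(g)=-\lambda^n$ rather than $+\lambda^n$; this sign discrepancy, transported through the ramified extension $L/E$, yields a discriminant or Brauer-theoretic obstruction at the residue field $R$ that no element of the trivial subgroup can satisfy, because $A_R$ is non-split.

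The hardest step will be to isolate the precise cohomological invariant in Merkurjev's quotient that detects improperness after unitary extension, and to verify via Theorem~\ref{thm:hypcond} that $\lambda$ is not already the multiplier of a hyperbolic similitude of $(B,\tau)_E$. Both verifications rest on the interplay between the ramification of $L/E$ at $x=0$ (which converts the proper/improper dichotomy for $(A,\sigma)$ into a detectable unitary invariant) and the non-splitness of $A_R$ (which prevents the improper class from collapsing into the trivial subgroup). Once these are in place, the non-triviality of $[g]$ in $\gPGU(B,\tau)(E)/R$ follows, establishing that $\gPGU(B,\tau)$ is not $R$-trivial.
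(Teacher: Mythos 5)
Your high-level strategy matches the paper's: invoke Proposition~\ref{prop:improper} to pass to a base extension where $(A,\sigma)$ acquires improper similitudes while $A$ stays non-split, then exploit the $x$-adic valuation on the generic unitary extension and Merkurjev's computation of the group of $R$-equivalence classes to detect the improper class. The paper also works over $\wF_1 = k_1((x))$, the $x$-adic completion of $R(x)$, which is the right move (it makes the residue analysis tractable and suffices, since non-$R$-triviality over any field extension suffices).

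However, there are two genuine gaps in your plan. First, you misstate Merkurjev's quotient: for the unitary group it is $G(B,\tau)/(N(K/F)\cdot\Hyp(B,\tau))$ (display~\eqref{eq:MerkPGU/R}); there is no ``reduced norms from $B^\times$'' factor in the denominator. Second, and more seriously, the bulk of the difficulty lies precisely in computing the denominator $N(\wK/\wF)\cdot\Hyp(\wB,\wtau)$, and your proposal does not contain a plan for this beyond a vague appeal to Theorem~\ref{thm:hypcond}. The paper's Section~\ref{sec:Req} shows (Lemma~\ref{lem:Hyp}, Theorem~\ref{thm:PGU/R}) that controlling $\Hyp(\wB,\wtau)$ requires analyzing \emph{all} finite-degree extensions $L/\wF$ over which $\wB$ becomes hyperbolic, distinguishing unramified from ramified $L$, and forces the introduction of a new subgroup $S(A,\sigma)\subset k^\times$ generated by norms $N_{\ell/k}(\lambda)$ of scalars $\lambda$ that have $\sigma_\ell$-symmetric square roots in $A_\ell$. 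The decisive use of the hypothesis $\deg A\equiv 0\bmod 4$ is then Corollary~\ref{cor:orth}: a symmetric square root $g$ of $\lambda$ has $\Nrd(g)=(-\lambda)^{(\deg A)/2}=\lambda^{(\deg A)/2}$, hence is a \emph{proper} similitude, so $S(A,\sigma)\subset G^+(A,\sigma)$; combined with $\Hyp(A,\sigma)\subset G^+(A,\sigma)$, this produces a surjection $\PGU(\wB,\wtau)/R\to G(A,\sigma)/G^+(A,\sigma)$, and the target is nontrivial by~\eqref{eq:propsimnothing} once improper similitudes exist over the non-split base. Your reading of the sign condition on $\Nrd$ is correct but applied to the wrong element: the sign that matters is not that of the improper similitude $g$ you extended from $A_R$, but that of the hypothetical symmetric square roots contributing to $\Hyp(\wB,\wtau)$, and the divisibility-by-$4$ hypothesis forces those to be proper. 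Without identifying and controlling $S(A,\sigma)$, the argument does not close.
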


It follows that the group $\gPGU(B,\tau)$ is not rational, nor even
stably rational, see~\cite[Sec.~4]{CTS77}. It is a group of adjoint
type $^2\mathsf{A}_n$ with $n\equiv3\bmod4$. The proof of
Theorem~\ref{thm:A3} is given in Subsection~\ref{subsec:3mod4}.
\smallbreak

Section~3 of \cite{QMT1} yields examples of central simple algebras
with orthogonal involution $(A,\sigma)$ of degree 
$\deg A\equiv2\bmod4$ 
that have improper similitudes, none of them being square-central. We
show in Example~\ref{ex:main} that for their generic unitary extension
$(B,\tau)$ the group $\gPGU(B,\tau)$ is not $R$-trivial. We
thus obtain examples of adjoint groups of type $^2\mathsf{A}_n$ that
are not rational nor stably rational for every integer $n\geq5$ with
$n\equiv1\bmod4$.
\smallbreak

The proof of Theorem~\ref{thm:A3} is prepared in Section~\ref{sec:Req}
by Theorem~\ref{thm:PGU/R}, which yields a computation in terms of
$(A,\sigma)$ of the group of $R$-equivalence classes in the group of
rational points of $\gPGU(B,\tau)$ over the completion of $F$ for the
$x$-adic valuation.

\subsection*{Notation}
We generally follow the notation and terminology of \cite{KMRT98}. The
characteristic of the base field is always assumed to be different
from~$2$. If $A$ is a central simple algebra of even degree over a
field $k$ and $\sigma$ is an orthogonal involution on $A$, we write
$\GO(A,\sigma)$ for the group of similitudes of $(A,\sigma)$,
\[
\GO(A,\sigma)=\{g\in A\mid \sigma(g)g\in k^\times\}.
\]
For $g\in \GO(A,\sigma)$ we let $\mu(g)=\sigma(g)g$ be the
\emph{multiplier} of $g$. The group of proper similitudes is
\[
\GO^+(A,\sigma)=\{g\in\GO(A,\sigma)\mid \Nrd_A(g)=\mu(g)^{(\deg
  A)/2}\}
\]
and we write 
\[
\GO^-(A,\sigma)=\GO(A,\sigma)\setminus \GO^+(A,\sigma) =
\{g\in\GO(A,\sigma)\mid \Nrd_A(g)=-\mu(g)^{(\deg A)/2}\}
\]
for the coset of improper similitudes (which may be empty). The
corresponding sets of multipliers are denoted as follows:
\[
G(A,\sigma)=\{\mu(g)\mid g\in\GO(A,\sigma)\},\quad
G^\pm(A,\sigma)=\{\mu(g)\mid g\in\GO^\pm(A,\sigma)\}.
\]
If $\delta\in k^\times$ represents the discriminant of $\sigma$, the
quaternion algebra $\bigl(\delta,\mu(g)\bigr)_k$ is Brauer-equivalent
to $k$ if $g$ is proper, and to $A$ if $g$ is improper;
see~\cite[Th.~A]{MT95} or \cite[(13.38)]{KMRT98}. Therefore,
\begin{equation}
  \label{eq:propsimnothing}
  G^+(A,\sigma)\cap G^-(A,\sigma) = \varnothing \qquad\text{if $A$ is
    not split.}
\end{equation}
The group of projective proper similitudes of $(A,\sigma)$ is
$\PGO^+(A,\sigma)=\GO^+(A,\sigma)/k^\times$. It is the group of
$k$-rational points of the algebraic group $\gPGO^+(A,\sigma)$, which
is the connected component of the identity in the group
$\gPGO(A,\sigma)=\gAut(A,\sigma)$ of automorphisms of $(A,\sigma)$. It
is a group of adjoint type $\mathsf{D}_n$ if $\deg A=2n$ with
$n\geq2$. 

For every field $\ell$ containing $k$ we write $A_\ell$ for the
$\ell$-algebra $A\otimes_k\ell$ and $\sigma_\ell$ for the involution
$\sigma\otimes\id_\ell$ on $A_\ell$, and we let
$(A,\sigma)_\ell=(A_\ell,\sigma_\ell)$. If $\ell$ is a finite-degree
extension of $k$, we let $N(\ell/k)=N_{\ell/k}(\ell^\times)$ be the
group of norms. Let $\Hyp(A,\sigma)\subset k^\times$ be the subgroup
generated by the norm groups $N(\ell/k)$ where $\ell$ runs over the
finite-degree field extensions of $k$ such that $(A,\sigma)_\ell$ is
hyperbolic. The following canonical isomorphism due to Merkurjev
\cite[Th.~1]{Mer96} yields a description 
of the group of $R$-equivalence classes of $\PGO^+(A,\sigma)$:
\begin{equation}
\label{eq:MerkPG0/R}
\PGO^+(A,\sigma)/R \simeq
G^+(A,\sigma)\big/\bigl(k^{\times2}\cdot\Hyp(A,\sigma)\bigr). 
\end{equation}
In particular, if $(A,\sigma)$ is hyperbolic, then
$\Hyp(A,\sigma)\supset N(k/k) = k^\times$, hence
\begin{equation}
  \label{eq:orthyp}
  G(A,\sigma)=G^+(A,\sigma)= \Hyp(A,\sigma)=k^\times
   \quad\text{and}\quad \PGO^+(A,\sigma)/R = 1
   \quad\text{if
    $(A,\sigma)$ is hyperbolic.}
\end{equation}

Corresponding notions are defined for unitary involutions: if $B$ is a
central simple algebra over a field $K$ and $\tau$ is a unitary
involution on $B$, i.e., an involution that does not leave $K$
elementwise fixed, let $F\subset K$ be the subfield of fixed elements
and
\[
\GU(B,\tau)=\{g\in B\mid \tau(g)g\in F^\times\},\qquad
G(B,\tau)=\{\tau(g)g\mid g\in \GU(B,\tau)\}\subset F^\times.
\]
The group of projective similitudes
$\PGU(B,\tau)=\GU(B,\tau)/K^\times$ is the group of $F$-rational
points of the algebraic group $\gPGU(B,\tau)=\gAut_K(B,\tau)$, which
is a group of adjoint type $^2\mathsf{A}_{n-1}$ over $F$ if $\deg
B=n>2$. The group $\Hyp(B,\tau)\subset F^\times$ is defined as in the
orthogonal case, 
and Merkurjev's canonical isomorphism takes the form
\begin{equation}
\label{eq:MerkPGU/R}
\PGU(B,\tau)/R \simeq
G(B,\tau)\big/\bigl(N(K/F)\cdot\Hyp(B,\tau)\bigr). 
\end{equation}
As in the orthogonal case, we have
\begin{equation}
  \label{eq:unithyp}
  G(B,\tau)=\Hyp(B,\tau)=F^\times
 \quad\text{and}\quad
    \PGU(B,\tau)/R=1
  \quad\text{if $(B,\tau)$ is hyperbolic.}
\end{equation}

\section{Improper similitudes}
\label{sec:improper}

Throughout this section, $A$ is a central simple algebra of
degree~$2n$ over an arbitrary field $k$ of characteristic different
from~$2$ and $\sigma$ is an orthogonal involution on $A$. If $A$ is a
quaternion algebra, then $(A,\sigma)$ admits improper similitudes
(see~\cite[(12.25)]{KMRT98}), hence Proposition~\ref{prop:improper}
holds with $R=k$. We may therefore assume throughout $n\geq2$, so
$\gPGO^+(A,\sigma)$ is a semisimple linear algebraic group.

Recall from~\cite[\S23.B]{KMRT98} that $\gPGO(A,\sigma)$ has two
connected components. Write $X=\gPGO^-(A,\sigma)$ for the non-identity
component. It is a $\gPGO^+(A,\sigma)$-torsor whose rational points
consist of inner automorphisms induced by improper similitudes of
$(A,\sigma)$. Therefore, $X$ is an affine, smooth, geometrically
connected $k$-variety, and its function field $k(X)$ is a finitely
generated regular extension of $k$. By definition, $X$ has rational
points over $k(X)$, hence $(A_{k(X)},\sigma_{k(X)})$ admits improper
similitudes. To establish Proposition~\ref{prop:improper}, it is
therefore sufficient to show:

\begin{proposition}
  \label{prop:kerBr}
  If the discriminant of $\sigma$ is not trivial, then the Brauer
  group map $\Br(k)\to\Br\bigl(k(X)\bigr)$ induced by scalar extension
  is injective.
\end{proposition}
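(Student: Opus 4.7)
The plan is to apply the Hochschild--Serre five-term sequence for $X\to\Spec k$, reducing the injectivity of $\Br(k)\to\Br(k(X))$ to the vanishing of a single transgression $d_2$, and then exhibit that vanishing via an explicit lift of the torsor class $[X]$. Since $X$ is smooth and geometrically integral, and over $\bar k$ trivialises to the semisimple adjoint group $\bar G:=\gPGO^+(A,\sigma)_{\bar k}$, which has no nontrivial characters, we have $\bar k[\bar X]^\times=\bar k^\times$ and $\Pic(\bar X)\cong\Pic(\bar G)\cong X^*(\tilde Z)$, where $\tilde Z$ is the center of the simply connected cover $\gSpin(A,\sigma)$. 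Hochschild--Serre then gives
\[
0\to\Pic(X)\to X^*(\tilde Z)^\Gamma\xrightarrow{d_2}\Br(k)\to\Br_1(X)\to\cdots,
\]
and purity on the smooth scheme $X$ yields $\Br(X)\hookrightarrow\Br(k(X))$, whence $\ker\bigl(\Br(k)\to\Br(k(X))\bigr)=\Im(d_2)$. It therefore suffices to show $d_2=0$.

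Next I would compute $X^*(\tilde Z)^\Gamma$. The three nontrivial characters of $\tilde Z$ are the vector character $v$ (with $\ker v=\ker(\gSpin\to\gO^+)$) and the two half-spin characters $s^+$, $s^-$, satisfying $s^++s^-=v$. The Galois action on $\tilde Z$ factors through $\pi_0(\gPGO(A,\sigma))\cong\Z/2$, whose nontrivial element swaps $s^+$ and $s^-$ (fixing $v$), and the image of the twisting cocycle for $(A,\sigma)$ in $H^1(k,\Z/2)=k^\times/k^{\times 2}$ is the class of $\disc\sigma$. Since $\disc\sigma$ is nontrivial, this action is nontrivial, so $X^*(\tilde Z)^\Gamma=\{0,v\}\cong\Z/2$, and only $d_2(v)=0$ remains to be checked.

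By the standard identification of the transgression, $d_2(\chi)=\chi_*\eta(X)$ where $\eta(X)\in H^2(k,\tilde Z)$ is the image of $[X]\in H^1(k,G)$ under the connecting map of $1\to\tilde Z\to\gSpin(A,\sigma)\to G\to 1$ (with $G=\gPGO^+(A,\sigma)$). Since $\ker(v)=\ker(\gSpin\to\gO^+)$ and the identity $\gGO^+=\gO^+\cdot\gm$ gives $\gO^+/\mu_2\cong\gPGO^+=G$, the class $d_2(v)=v_*\eta(X)\in H^2(k,\mu_2)$ is precisely the obstruction to lifting $[X]$ along the central isogeny $1\to\mu_2\to\gO^+(A,\sigma)\to G\to 1$. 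I would exhibit the lift explicitly: the variety $Y:=\gO^-(A,\sigma)$ of improper isometries is a $\gO^+(A,\sigma)$-torsor under left multiplication, and the composition $Y\hookrightarrow\gGO^-(A,\sigma)\to X$ identifies $X$ with $Y/\mu_2$ (over $\bar k$ the fibre over $[g]\in X$ is $\{\pm g/\sqrt{\mu(g)}\}$). Hence $[X]$ is the image of $[Y]\in H^1(k,\gO^+(A,\sigma))$, and $d_2(v)=0$.

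The hardest part will be the careful bookkeeping of central isogenies in the twisted setting---identifying $\gSpin/\ker(v)$ with $\gO^+(A,\sigma)$, $\gO^+/\mu_2$ with $\gPGO^+$, and the Galois action on $\tilde Z$ in terms of $\disc\sigma$. Once these are settled, the transgression formula $d_2(\chi)=\chi_*\eta(X)$ is a standard consequence of the Hochschild--Serre formalism for torsors under connected reductive groups, and delivers $d_2(v)=0$ at once from the lift provided by $Y$.
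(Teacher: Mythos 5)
Your argument is correct and follows essentially the same road map as the paper: both pass through the Hochschild--Serre five-term sequence, reduce to the map $(\Pic X_s)^\Gamma\to\Br(k)$, use the nontriviality of $\disc\sigma$ to identify $(\Pic X_s)^\Gamma\cong X^*(\tilde Z)^\Gamma\cong\Z/2\Z$, and then kill the remaining class by means of the cover of $X$ by improper isometries. Where you differ is in the packaging of that last step. You invoke the identification $d_2(\chi)=\chi_*\eta(X)$ of the transgression with the pushforward of the non-abelian lifting obstruction along $1\to\tilde Z\to\gSpin(A,\sigma)\to G\to 1$, then show the obstruction along $1\to\mu_2\to\gO^+(A,\sigma)\to G\to 1$ dies because $Y=\gO^-(A,\sigma)$ is a lift of $X$. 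That transgression formula is true but not a two-line diagram chase; it deserves a citation (Sansuc, or Borovoi--Kunyavski\u\i) rather than being waved in as ``standard.'' The paper sidesteps it entirely: rather than working with $\mu_2$-covers and non-abelian $H^1$, it notes that the enlarged map $\gGO^-(A,\sigma)\to X$ is a $\gGm$-torsor, hence an element of $\Pic(X)$, which is nonzero because over $k_s$ it becomes the nontrivial torsor $\gGO^+(A_s,\sigma_s)\to G_s$. Since $(\Pic X_s)^\Gamma\cong\Z/2\Z$ and the sequence is exact, $\Pic(X)\neq 0$ forces $d_2=0$ outright. The two arguments are reflections of the same geometric fact; the paper's version is the more self-contained, because it replaces the cohomological transgression formula by an explicit class in $\Pic(X)$ and a base-change computation.
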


\begin{proof}
  Since $X$ is smooth the map $\Br(X)\to\Br\bigl(k(X)\bigr)$ is
  injective, hence it suffices to show that the map $\Br(k)\to\Br(X)$
  is injective. 

  Let $k_s$ be a separable closure of $k$ and $\Gamma=\Gal(k_s/k)$ the
  absolute Galois group of $k$. To simplify notations, write $G$ for
  $\gPGO^+(A,\sigma)$, and let $G_s$ (resp.\ $X_s$) denote the
  algebraic group over $k_s$ (resp.\ algebraic variety over $k_s$)
  obtained from $G$ (resp.\ $X$) by base change from $k$ to
  $k_s$. Since $X_s(k_s)\neq\varnothing$, the variety $X_s$ is
  isomorphic to the underlying variety of $G_s$. It then follows from
  a theorem of Rosenlicht~\cite[Th.~3]{Ros61} that every invertible
  regular function 
  on $X_s$ is constant. Therefore, the Hochschild--Serre spectral
  sequence $H^p\bigl(\Gamma,H^q(X_s,\gGm)\bigr)\Rightarrow
  H^n(X,\gGm)$ yields the following exact sequence of low-degree terms
  (see \cite[Lemme~6.3(i)]{San81}):
  \[
  0\to\Pic(X)\to(\Pic X_s)^\Gamma \to \Br(k) \to
  \Br(X).
  \]
  Now, by \cite[Lemme~6.7]{San81} we have $(\Pic X_s)^\Gamma\simeq
  (\Pic G_s)^\Gamma$ and, by \cite[Lemme~6.9]{San81} (see
  also~\cite[(31.21)]{KMRT98}), $\Pic G_s$ can be identified with the
  dual $Z_s^*$ of the center $Z_s$ of the simply connected cover
  $\gSpin(A_s,\sigma_s)$ of $G_s$. Since the discriminant of $\sigma$
  is not trivial, $\Gamma$ acts non-trivially on $Z_s^*$, and we have
  $(\Pic G_s)^\Gamma\simeq\Z/2\Z$. Therefore, to complete the proof it
  suffices to show that $\Pic(X)\neq0$.

  For this, consider the canonical map $\gGO^-(A,\sigma)\to X$: it
  defines a torsor for $\gGm$ over $X$, hence an element of
  $H^1(X,\gGm)=\Pic(X)$. This element is not trivial because after
  scalar extension to $k_s$ the torsor is isomorphic to
  $\gGO^+(A_s,\sigma_s)\to G_s$. The proof is thus complete.
\end{proof}

\section{Unitary extensions of involutions of the first kind}
\label{sec:unitext}

In this section, $A$ is a central simple algebra over an arbitrary
field $F$ of characteristic different from~$2$ and $\sigma$ is an
$F$-linear involution on $A$ (i.e., an involution that may be
orthogonal or symplectic). Let $K$ be a
quadratic field extension of $F$ and let $\iota$ denote its nontrivial
automorphism. We consider the algebra with unitary involution
\[
(B,\tau)=(A,\sigma)\otimes_F(K,\iota).
\]
In preparation for the next section, where a special case of this
construction will be analyzed, we determine a necessary and sufficient
condition for $(B,\tau)$ to be hyperbolic.

\begin{theorem}
  \label{thm:hypcond}
  If there is an embedding of $F$-algebras with involution $(K,\id)
  \hookrightarrow (A,\sigma)$, then $(B,\tau)$ is hyperbolic. The
  converse holds, except in the case where $A$ is split of
  degree~$2\bmod4$ and $\sigma$ is symplectic.
\end{theorem}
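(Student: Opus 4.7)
For the forward direction, I plan to exhibit an explicit hyperbolic idempotent. Given the embedding realized by $\alpha\in A$ with $\sigma(\alpha)=\alpha$ and $\alpha^2=d$ (where $K=F(\sqrt d)$), the idea is to form $u:=\alpha\otimes(\sqrt d)^{-1}\in B$ and check $u^2=1$ and $\tau(u)=-u$; then $e:=\tfrac12(1+u)$ is an idempotent with $\tau(e)=1-e$, witnessing hyperbolicity.

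For the converse, I start from a hyperbolic idempotent $e\in B$ and decompose it as $e=a+b\sqrt d$ with $a,b\in A$ along $B=A\oplus A\sqrt d$. The conditions $e^2=e$ and $\tau(e)=1-e$ translate into
\[
\sigma(a)=1-a,\qquad \sigma(b)=b,\qquad a^2+db^2=a,\qquad ab+ba=b.
\]
The last identity rewrites as $(1-a)b=ba$, equivalently $b\sigma(a)=ab$; multiplying, $ab^2=b(\sigma(a)b)=b(ba)=b^2a$, so $a$ commutes with $b^2$.

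The key construction will be for invertible $b$: set $\alpha:=ab^{-1}$. Then $(1-a)b=ba$ gives $b^{-1}a=(1-a)b^{-1}$, whence $\sigma(\alpha)=b^{-1}(1-a)=ab^{-1}=\alpha$; and using $[a,b^2]=0$ together with $a(1-a)=db^2$, one computes $\alpha^2=ab^{-1}\cdot ab^{-1}=a(1-a)b^{-2}=db^2\cdot b^{-2}=d$. Hence $\sqrt d\mapsto\alpha$ supplies the desired embedding.

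To handle general $b$, my plan is to realize $(A,\sigma)$ as the adjoint of a hermitian form $h$ on a $D$-module $V$ and study $N:=\ker(b|_V)$. The displayed relations force $aN,(1-a)N\subset N$ and $a^2=a$ on $N$, while the computation $h(av,aw)=h(v,(1-a)(aw))=0$ for $v,w\in N$ shows that $a(N)$ and $(1-a)(N)$ are both totally isotropic for $h$. Consequently, when $(A,\sigma)$ is anisotropic, $N=0$ and $b$ is automatically invertible, finishing that case. For isotropic $(A,\sigma)$, I will carry out a Witt-type reduction, peeling off a hyperbolic summand and combining the embedding obtained on the anisotropic kernel with the standard formula $\sqrt d\mapsto\bigl(\begin{smallmatrix}0&dI\\I&0\end{smallmatrix}\bigr)$ on an orthogonal hyperbolic summand (and an analogous antisymmetric formula on symplectic hyperbolic summands of degree divisible by~$4$). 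The hard part will be verifying that the only obstruction to this piecing-together is the excluded configuration: when $(A,\sigma)$ is split symplectic of degree $\equiv 2\pmod 4$, $(A,\sigma)$ is automatically hyperbolic (so $(B,\tau)$ is too), yet a parity obstruction (visible already in degree~$2$, where the $\sigma$-symmetric elements of $M_2(F)$ are merely the scalars) prevents a $\sigma$-symmetric square root of a non-square $d$ from existing, which is exactly the stated exception.
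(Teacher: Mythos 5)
Your completed steps track the paper's proof closely. The forward direction is identical. For the converse, the decomposition $e=a+b\sqrt d$, the derived relations, and the computation in the invertible-$b$ case are the same as the paper's (the paper writes $e=e_1\otimes1+e_2\otimes u$ and sets $s_0=e_1e_2^{-1}$). Your argument that anisotropy of $\sigma$ forces $b$ invertible is correct and is a pleasantly more elementary variant: you realize $A=\End_DV$, set $N=\ker b$, and show $a(N)$ and $(1-a)(N)$ are totally isotropic for $h$, whereas the paper works with the $\sigma_0$-invariant right ideal $\{x:e_2x=0\}$ and invokes the classification of such ideals from \cite[Cor.~1.8]{BST93}. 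Both yield $b$ invertible; yours avoids the ideal-theoretic input.

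However, the rest of the converse is left as a plan, and that plan conceals exactly the content that gives rise to the exceptional case. First, the Witt reduction needs two ingredients you only allude to: (i) Witt cancellation, to deduce from hyperbolicity of $(B,\tau)$ that $(A_0,\sigma_0)\otimes(K,\iota)$ is hyperbolic; and (ii) a factorization $(A_1,\sigma_1)\simeq(M_2(F),\theta)\otimes_F(A'_1,\sigma'_1)$ underpinning your formula $\sqrt d\mapsto\bigl(\begin{smallmatrix}0&dI\\I&0\end{smallmatrix}\bigr)$. Point (ii) is precisely where \cite[Th.~2.2]{BST93} and the divisibility of $\deg A_1$ by~$4$ in the symplectic case enter, and it is exactly this step that fails when $A$ is split symplectic of degree $\equiv2\bmod4$; your "analogous antisymmetric formula" is not a substitute, since no such factorization exists there. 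Second, your verification of the exception proves only the $n=1$ case ($M_2(F)$): showing that a split symplectic $(A,\sigma)$ of degree $2n$ with $n$ odd has no $\sigma$-symmetric $s$ with $s^2\in F^\times\setminus F^{\times2}$ for \emph{all} odd $n$ requires the odd-degree Pfaffian characteristic polynomial argument of \cite[(2.9)]{KMRT98} (or an equivalent), which your degree-$2$ observation does not supply.
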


The proof uses the Witt decomposition of involutions. Recall that $A$
can be represented as $\End_DV$ for some vector space $V$ over a
division algebra $D$; then $\sigma$ is adjoint to a nondegenerate
hermitian (or skew-hermitian) form $h$ on $V$ with respect to some
involution of the first kind on $D$. The space $(V,h)$ has a
decomposition
\[
(V,h)\simeq(V_0,h_0)\perp(V_1,h_1)
\]
with $h_0$ anisotropic and $h_1$ hyperbolic, which is reflected in a
so-called orthogonal sum decomposition of $(\End_DV,\ad_h)$ into
$(\End_DV_0,\ad_{h_0}) \boxplus (\End_DV_1,\ad_{h_1})$, see
\cite[Sec.~1.4]{BFT07}. Thus, we may find a decomposition
\[
(A,\sigma)\simeq(A_0,\sigma_0)\boxplus(A_1,\sigma_1)
\]
where $A_0$, $A_1$ are central simple $F$-algebras Brauer-equivalent
to $A$ (if they are not $\{0\}$), where $\sigma_0$, $\sigma_1$ are
involutions of the same type 
as $\sigma$, and where $\sigma_0$ is anisotropic (which means that
$\sigma_0(a)a=0$ implies $a=0$) and $\sigma_1$ is hyperbolic (which
means there is an idempotent $e\in A_1$ such that $\sigma_1(e)=1-e$).

\begin{proof}[Proof of Theorem~\ref{thm:hypcond}]
  Let $K=F(u)$ where $u^2=a\in F^\times$. If $(K,\id)$ embeds into
  $(A,\sigma)$, we may find $s\in A$ such that $\sigma(s)=s$ and
  $s^2=a$. Consider then
  \[
  e={\textstyle\frac12}(1\otimes1+s\otimes u^{-1})\in B.
  \]
  Computation shows that $e^2=e$ and $\tau(e)=1-e$, hence $(B,\tau)$
  is hyperbolic.

  For the converse, suppose we are not in the situation where $A$ is
  split of degree~$2\bmod4$ with $\sigma$ symplectic, and consider a
  Witt decomposition
  $(A,\sigma)\simeq(A_0,\sigma_0)\boxplus(A_1,\sigma_1)$ with
  $\sigma_0$ anisotropic and $\sigma_1$ hyperbolic. If $\sigma$ is
  symplectic and $A$ is split, then $A=A_1$ because symplectic
  involutions on split algebras are hyperbolic, and $\deg A$ is
  assumed to 
  be divisible by~$4$. Likewise, if $\sigma$ is symplectic and $A$ is
  not split, then $\deg A_1$ is divisible by~$4$ because $\sigma_1$ is
  adjoint to a hyperbolic form over a noncommutative division
  algebra. By \cite[Th.~2.2]{BST93} it follows that in all cases
  (including the case where $\sigma$ is orthogonal) there is a central
  simple $F$-algebra with involution $(A'_1,\sigma'_1)$ such that
  \[
  (A_1,\sigma_1) \simeq (M_2(F),\theta) \otimes_F (A'_1,\sigma'_1)
  \]
  with $\theta$ the hyperbolic orthogonal involution defined by
  \[
  \theta
  \begin{pmatrix}
    x_{11}& x_{12}\\ x_{21}& x_{22}
  \end{pmatrix}=
  \begin{pmatrix}
    x_{22}&x_{12}\\ x_{21}& x_{11}
  \end{pmatrix}
  \qquad\text{for $x_{11}$, $x_{12}$, $x_{21}$, $x_{22}\in F$.}
  \]
  Then $A_1$ contains
  $s_1=\bigl(\begin{smallmatrix}0&1\\
    a&0\end{smallmatrix}\bigr)\otimes1$, which satisfies 
  $\sigma_1(s_1)=s_1$ and $s_1^2=a$. We next show that if $(B,\tau)$
  is hyperbolic then $A_0$ contains an element $s_0$ such that
  $\sigma_0(s_0)=s_0$ and $s_0^2=a$; then
  $s=\bigl(\begin{smallmatrix}s_0 & 
    0 \\ 0& s_1\end{smallmatrix}\bigr)\in A_0\boxplus A_1=A$ satisfies $\sigma(s)=s$
  and $s^2=a$, hence mapping $u\mapsto s$ defines an embedding
  $(K,\id)\hookrightarrow(A,\sigma)$.

  Note that $(B,\tau)=[(A_0,\sigma_0)\otimes(K,\iota)] \boxplus
  [(A_1,\sigma_1)\otimes(K,\iota)]$, and the second term on the right
  side is hyperbolic because $\sigma_1$ is hyperbolic. Therefore, the
  hypothesis that $(B,\tau)$ is hyperbolic implies
  $(A_0,\sigma_0)\otimes(K,\iota)$ is hyperbolic. We may then find
  $e=e_1\otimes1+e_2\otimes u\in A_0\otimes_FK$ such that $e^2=e$ and
  $(\sigma_0\otimes\iota)(e)=1-e$, or equivalently
  \begin{equation}
  \label{eq:conde}
  (\sigma_0\otimes\iota)(e)=1-e\qquad\text{and}\qquad
  (\sigma_0\otimes\iota)(e)e=0.
  \end{equation}
  These conditions yield
  \begin{equation}
    \label{eq:conde1}
    \sigma_0(e_1)=1-e_1
  \end{equation}
  and
  \begin{equation}
    \label{eq:conde2}
    \sigma_0(e_1)e_1=a\sigma_0(e_2)e_2.
  \end{equation}
  Now, consider the right ideal $I=\{x\in A_0\mid e_2x=0\}$. By
  \cite[Cor.~1.8]{BST93} we may find $f\in A_0$ such that
  $\sigma_0(f)=f=f^2$ and $I=fA_0$, because $\sigma_0$ is
  anisotropic. Since $e_2f=0$, multiplying~\eqref{eq:conde2} on the
  left and on the right by $f$ yields $\sigma_0(e_1f)e_1f=0$, hence
  $e_1f=0$ because $\sigma_0$ is anisotropic. By~\eqref{eq:conde1} we
  have
  \[
  fe_1\sigma_0(fe_1)=f(e_1-e_1^2)f.
  \]
  The right side is~$0$ since $e_1f=0$, hence $\sigma_0(fe_1)=0$
  because $\sigma_0$ is anisotropic. By~\eqref{eq:conde1} again, it
  follows that $(1-e_1)f=0$, hence $f=0$ since $e_1f=0$. Therefore,
  $I=\{0\}$, hence $e_2$ is invertible and we may set
  $s_0=e_1e_2^{-1}\in A_0$. From~\eqref{eq:conde2} it follows that
  $s_0^2=a$. Now, \eqref{eq:conde} also yields
  $\sigma_0(e_1)e_2=\sigma_0(e_2)e_1$, hence $\sigma_0(s_0)=s_0$.

  We have thus proved the existence of an embedding
  $(K,\id)\hookrightarrow(A,\sigma)$ when $(B,\tau)$ is hyperbolic,
  setting aside the case where $A$ is split of degree~$2\bmod4$ and
  $\sigma$ is symplectic. Note that in the exceptional case $(B,\tau)$
  is hyperbolic since $(A,\sigma)$ is hyperbolic; but every symmetric
  element in $A$ is a root of an odd-degree ``pfaffian'' polynomial
  (see \cite[(2.9)]{KMRT98}), hence there is no $s\in A$ such that
  $\sigma(s)=s$ and $s^2\in \ff\setminus\fs$.
\end{proof}

\begin{remark}
  The proof above is a slight modification of \cite[Th.~3.3]{BST93},
  where the existence of an embedding $(K,\iota)\hookrightarrow
  (A,\sigma)$ is shown to be equivalent to the hyperbolicity of
  $(A,\sigma)\otimes_F(K,\id)$, except when $A$ is split, $\sigma$ is
  orthogonal and its Witt index is odd.
\end{remark}

\section{$R$-equivalence on projective unitary groups}
\label{sec:Req}

In this section, we consider a special case of unitary extension.
Throughout the section, $(A,\sigma)$ is a central simple algebra with
orthogonal or symplectic involution over a field $k$ of characteristic
zero. We let $\widehat F=k((x))$ be the field of formal Laurent
series in one indeterminate over $k$, and $\widehat K=\widehat F(\xi)$
where $\xi^2=x$, hence $\widehat K=k((\xi))$. We write $\iota$ for the
nontrivial automorphism 
of $\widehat K/\widehat F$ and consider the algebra with involution
\begin{equation}
\label{eq:Btau}
(\widehat B,\widehat\tau)=(A,\sigma)\otimes_k(\widehat K,\iota).
\end{equation}
Thus, $\widehat B=A((\xi))$ (with $\xi$ centralizing $A$) and
\[
\widehat\tau\bigl(\sum_{i=r}^\infty a_i\xi^i\bigr) = \sum_{i=r}^\infty
\sigma(a_i)(-\xi)^i \qquad\text{for $a_i\in A$, $i=r$, $r+1$, \ldots}
\]
The $\widehat K$-algebra $\widehat B$ is central simple, and
$\widehat\tau$ is a unitary involution on $\widehat B$. 
Our goal is to compute $\PGU(\widehat B,\widehat\tau)/R$ in terms of
$(A,\sigma)$, using Merkurjev's canonical
isomorphism~\eqref{eq:MerkPGU/R}.
\medbreak

As a first step, we show that the trivial hyperbolic cases
(see~\eqref{eq:orthyp} and \eqref{eq:unithyp}) are related:

\begin{proposition}
  \label{prop:isohyp}
  The statements \emph{(a)} and \emph{(b)} (resp.\ \emph{(a')} and
  \emph{(b')}) are equivalent:
  \begin{align*}
    &\text{\emph{(a)}\quad$(A,\sigma)$ is isotropic,}&
    &\text{\emph{(a')}\quad$(A,\sigma)$ is hyperbolic,}\\
    &\text{\emph{(b)}\quad$(\wB,\wtau)$ is isotropic,}&
    &\text{\emph{(b')}\quad$(\wB,\wtau)$ is hyperbolic.}
  \end{align*}
  Similarly, if $C$ is a central simple algebra over a quadratic field
  extension $\ell$ of $k$ and $\rho$ is a unitary involution on $C$
  fixing $k$, the statements \emph{(c)} and \emph{(d)} (resp.\
  \emph{(c')} and \emph{(d')}) are equivalent:
  \begin{align*}
    &\text{\emph{(c)}\quad$(C,\rho)$ is isotropic,}&
    &\text{\emph{(c')}\quad$(C,\rho)$ is hyperbolic,}\\
    &\text{\emph{(d)}\quad$(C,\rho)_\wF$ is isotropic,}&
    &\text{\emph{(d')}\quad$(C,\rho)_\wF$ is hyperbolic.}
  \end{align*}
\end{proposition}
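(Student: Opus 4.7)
The plan is to establish the four biconditionals by a uniform strategy: the forward directions follow from scalar extension, while the reverse directions combine a leading-term argument (for isotropy) with the Witt decomposition of an involution (for hyperbolicity).

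For the forward directions (a)$\Rightarrow$(b), (a')$\Rightarrow$(b'), (c)$\Rightarrow$(d), (c')$\Rightarrow$(d'), observe that if $a\in A$ satisfies $\sigma(a)a=0$ with $a\neq 0$, then $a\otimes 1\in\widehat B$ is nonzero and $\widehat\tau(a\otimes 1)(a\otimes 1)=\sigma(a)a\otimes 1=0$; likewise, a hyperbolic idempotent $e\in A$ with $\sigma(e)=1-e$ yields the hyperbolic idempotent $e\otimes 1\in\widehat B$. The unitary analogues are proved identically by extending scalars from $k$ to $\widehat F$.

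For the reverse isotropy implication (b)$\Rightarrow$(a), take a nonzero $g\in\widehat B$ with $\widehat\tau(g)g=0$ and expand it as a Laurent series $g=\sum_{i\geq r}a_i\xi^i$ with $a_r\neq 0$. Since $\xi\in\widehat K$ centralizes $A$ in $\widehat B$ and $\widehat\tau(a_i\xi^i)=(-1)^i\sigma(a_i)\xi^i$, one computes
\[
\widehat\tau(g)g=\sum_{m\geq 2r}\xi^m\sum_{\substack{i+j=m\\ i,j\geq r}}(-1)^i\sigma(a_i)a_j,
\]
whose $\xi^{2r}$-coefficient is $(-1)^r\sigma(a_r)a_r$. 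Its vanishing forces $\sigma(a_r)a_r=0$, so $(A,\sigma)$ is isotropic. The analogous computation for $(C,\rho)_{\widehat F}$ uses the expansion $g=\sum_{i\geq r}c_ix^i$ and, since $\rho\otimes\id_{\widehat F}$ fixes $x$, produces $\rho(c_r)c_r=0$, yielding (d)$\Rightarrow$(c).

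For the hyperbolic reverse implications (b')$\Rightarrow$(a') and (d')$\Rightarrow$(c'), use the orthogonal sum Witt decomposition $(A,\sigma)\simeq(A_0,\sigma_0)\boxplus(A_1,\sigma_1)$ recalled in Section~\ref{sec:unitext}, with $\sigma_0$ anisotropic and $\sigma_1$ hyperbolic. Since $-\otimes_k(\widehat K,\iota)$ respects orthogonal sums,
\[
(\widehat B,\widehat\tau)\simeq(\widehat B_0,\widehat\tau_0)\boxplus(\widehat B_1,\widehat\tau_1),
\]
where $(\widehat B_1,\widehat\tau_1)$ is hyperbolic by the forward implication applied to $\sigma_1$ and $(\widehat B_0,\widehat\tau_0)$ is anisotropic by the previous paragraph applied to $\sigma_0$. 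This is therefore the Witt decomposition of $(\widehat B,\widehat\tau)$, so its hyperbolicity forces $\widehat B_0=0$, whence $A_0=0$ and $(A,\sigma)$ is hyperbolic. The argument for (d')$\Rightarrow$(c') is identical. The only delicate point throughout is the leading-term computation, which relies on $\xi$ centralizing $A$ and on $(-1)^r\neq 0$ in characteristic different from $2$; everything else is formal manipulation with Witt decompositions.
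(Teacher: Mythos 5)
Your proof is correct and follows essentially the same route as the paper: the forward implications by inclusion of $(A,\sigma)$ in $(\wB,\wtau)$, the reverse isotropy implication by extracting the lowest-order coefficient $(-1)^r\sigma(a_r)a_r$ of $\wtau(g)g$, and the reverse hyperbolicity implication by combining the Witt decomposition $(A_0,\sigma_0)\boxplus(A_1,\sigma_1)$ with the already-established anisotropy statement to force $A_0=\{0\}$.
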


\begin{proof}
  Since $(A,\sigma)\subset(\wB,\wtau)$, it is clear that
  (a)~$\Rightarrow$~(b) and (a')~$\Rightarrow$~(b'). To see
  (b)~$\Rightarrow$~(a), suppose $y\in\wB$ is nonzero and
  $\wtau(y)y=0$. Write $y$ as a series $y=\sum_{i=r}^\infty a_i\xi^i$
  with coefficients in $A$, with $a_r\neq0$. The coefficient of
  $x^{r}$ in $\wtau(y)y$ is $(-1)^r\sigma(a_r)a_r$, hence
  $\sigma(a_r)a_r=0$. It follows that $\sigma$ is isotropic, proving
  (b)~$\Rightarrow$~(a). 

  To establish (b')~$\Rightarrow$~(a'), consider a Witt decomposition
  $(A,\sigma)\simeq(A_0,\sigma_0)\boxplus(A_1,\sigma_1)$ with
  $\sigma_0$ anisotropic and $\sigma_1$ hyperbolic, as in the proof of
  Theorem~\ref{thm:hypcond}. Then $(A_1,\sigma_1)\otimes_k(\wK,\iota)$
  is hyperbolic, hence the condition that $(\wB,\wtau)$ is hyperbolic
  implies $(A_0,\sigma_0)\otimes_k(\wK,\iota)$ is hyperbolic. But
  $(A_0,\sigma_0)\otimes_k(\wK,\iota)$ is anisotropic since
  (b)~$\Rightarrow$~(a), hence $A_0=\{0\}$ and therefore $(A,\sigma)$
  is hyperbolic.

  The proof of the equivalence of (c) and (d) (resp.\ (c') and (d'))
  is similar; we omit it.
\end{proof}

We next make some observations on the norm group $N(L/\wF)$ of a
finite-degree field extension $L$ of $\wF$. Recall that the $x$-adic
valuation on $\wF$ extends uniquely to a valuation on $L$. We let $v$
denote this valuation. Let $\ell$ be the residue field of $L$ and $M$
be the unramified closure of $\wF$ in $L$, which is the unique
unramified extension of $\wF$ in $L$ with residue field $\ell$ (see
\cite[Prop.~A.17]{TW15}). Let also $\pi$ be a uniformizer of $L$, and
let
\[
[L:M]=e \qquad\text{and}\qquad [M:\wF]=f.
\]
By \cite[Ch.~II, Th.~2]{Ser68}, we may identify $L=\ell((\pi))$ and
$M=\ell((x))$ since the characteristic of $k$ is zero. If $e=1$, we
take $\pi=x$. If $e>1$, let
\[
u=N_{L/M}(\pi)x^{-1}\in M^\times.
\]
Since $v\bigl(N_{L/M}(\pi)\bigr)=e\,v(\pi)=v(x)$, it follows that
$v(u)=0$. We may therefore consider the residue
$\overline{u}\in\ell^\times\subset M^\times$.

\begin{lemma}
  \label{lem:norms}
  \begin{enumerate}
  \item[(a)]
    If $e=1$ and $f$ is even, then $N(L/\wF)\subset N(\wK/\wF)\cdot
    N(\ell/k)$.
  \item[(b)]
    If $e$ is even, then $N(L/\wF)\subset
  N(\wK/\wF)\cdot\{1,N_{\ell/k}(-\overline{u})\}$ and
  $x\equiv-\overline u \bmod L^{\times2}$.
  \end{enumerate}
\end{lemma}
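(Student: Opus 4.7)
My plan is to work through the tower $\wF \subset M \subset L$ and compute $N_{L/\wF} = N_{M/\wF} \circ N_{L/M}$, tracking everything modulo squares, which all lie in $N(\wK/\wF)$ since $a^2 = N_{\wK/\wF}(a)$ for $a \in \wF$. Two handy tools: because $\ch k = 0$, the principal units of any complete DVR are squares via the binomial series, so any unit is congruent to its residue modulo squares, and a residue class in $\ell^\times$ is always congruent to its inverse modulo squares; and $-x = N_{\wK/\wF}(\xi)$, whence $x^f \equiv (-1)^f \pmod{N(\wK/\wF)}$.

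Part (a) is then almost immediate. With $e=1$ we have $L = M = \ell((x))$, so every $y \in L^\times$ has the form $y = x^n u'$ with $u' \in \ell[[x]]^\times$, and $N_{L/\wF}(y) = x^{nf}\, N_{M/\wF}(u')$. Evenness of $f$ makes $x^{nf}$ a square (hence a norm from $\wK$); and $u' \equiv \overline{u'}$ modulo squares, with $N_{M/\wF}(\overline{u'}) = N_{\ell/k}(\overline{u'})$ by the unramified norm formula. Hence $N_{L/\wF}(y) \in N(\ell/k)\cdot N(\wK/\wF)$.

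For part (b), the first step is to compute $N_{L/M}(\pi)$ via the Eisenstein minimal polynomial $T^e + c_{e-1}T^{e-1} + \cdots + c_0$ of $\pi$ over $M$: since $e$ is even, $N_{L/M}(\pi) = c_0$, and by definition $c_0 = xu$. Rewriting the Eisenstein relation as $\pi^e = -c_0(1+t)$ for some $t \in \mathfrak m_L$ (the remaining terms all having $v_L > e$), and noting that $\pi^e$ is a square in $L$ (as $e$ is even) and $1+t$ is a square by the binomial series, I deduce $-xu \in L^{\times 2}$; combined with $u \equiv \bar u \equiv \bar u^{-1}$ modulo squares this gives $x \equiv -\bar u \pmod{L^{\times 2}}$. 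Writing any $y \in L^\times$ as $y = \pi^n w$ with $w \in \cO_L^\times$, the two factors are controlled as follows: $N_{L/\wF}(\pi) = N_{M/\wF}(xu) = x^f N_{M/\wF}(u) \equiv (-1)^f N_{\ell/k}(\bar u) = N_{\ell/k}(-\bar u) \pmod{N(\wK/\wF)}$; and for the unit contribution, all Galois conjugates of $w$ over $M$ reduce to $\bar w$ (by total ramification), so $N_{L/M}(w) \equiv \bar w^e \pmod{\mathfrak m_M}$, which is a square in $\cO_M^\times$ since $e$ is even, whence $N_{L/\wF}(w) \in \wF^{\times 2}$. Therefore $N_{L/\wF}(y) \equiv N_{\ell/k}(-\bar u)^n \pmod{N(\wK/\wF)}$, and this lies in $\{1, N_{\ell/k}(-\bar u)\}\cdot N(\wK/\wF)$ because $N_{\ell/k}(-\bar u)^2 \in \wF^{\times 2}$.

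The one piece of bookkeeping that requires care is the sign $(-1)^f$ arising from the reduction of $x^f$ modulo $N(\wK/\wF)$: tracking it correctly is exactly what converts $N_{\ell/k}(\bar u)$ into $N_{\ell/k}(-\bar u)$ and accounts for the minus sign in the statement. Everything else is standard manipulation of norms in a local tower via the principal-units-are-squares trick.
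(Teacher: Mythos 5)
Your proof is correct and follows essentially the same route as the paper's: decompose $N_{L/\wF}$ through the unramified subextension $M$, reduce units to their residues via Hensel's lemma (principal units are squares), absorb $x^f$ into $N(\wK/\wF)$ using $-x=N_{\wK/\wF}(\xi)$, and extract $-xu\in L^{\times2}$ from the (Eisenstein) minimal polynomial of $\pi$ over $M$ — the paper derives the needed valuation bounds on the coefficients directly rather than citing the Eisenstein property, but that is the same computation. The sign bookkeeping $(-1)^f N_{\ell/k}(\overline u)=N_{\ell/k}(-\overline u)$ matches the paper's identity $N_{M/\wF}(ux)=N_{M/\wF}(-u)(-x)^f$.
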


\begin{proof}
  (a)
  Every nonzero
  element of 
  $L$ can be written in the form $ax^r(1+m)$ for some
  $a\in\ell^\times$, some $r\in\mathbb{Z}$ and some $m\in L$ such that
  $v(m)>0$. We have
  \[
  N_{L/\wF}\bigl(ax^r(1+m)\bigr) = N_{\ell/k}(a)\, x^{rf}
  N_{L/\wF}(1+m).
  \]
  Since $f$ is even and $N_{\wK/\wF}(\xi)=-x$ it follows that
  \[
  x^{rf}=N_{\wK/\wF}(\xi^{rf})\in N(\wK/\wF).
  \]
  Moreover, Hensel's lemma shows that $1+m\in L^{\times2}$, hence
  $N_{L/\wF}(1+m)\in\wF^{\times2}\subset N(\wK/\wF)$. Therefore, the
  norm of every nonzero element in $L$ lies in $N(\wK/\wF)\cdot
  N(\ell/k)$. 
\smallbreak\par\noindent
  (b)
  In this case $L$ and $M$ have the same residue field, hence every
  element $y_0\in L$ such that 
  $v(y_0)=0$ can be written as $y_0=z(1+m)$ for some $z\in M^\times$
  and some $m\in L$ with $v(m)>0$. Therefore, for every element $y\in
  L^\times$ there exist $z\in M^\times$, $m\in L$ with $v(m)>0$ and
  $r\in\mathbb{Z}$ such that $y=z\pi^r(1+m)$. Then
  \begin{equation}
    \label{eq:N}
    N_{L/\wF}(y)=N_{M/\wF}(z)^eN_{M/\wF}(ux)^rN_{L/\wF}(1+m).
  \end{equation}
  Since $e$ is even, $N_{M/\wF}(z)^e\in\wF^{\times2}$. Similarly, 
  $N_{L/\wF}(1+m)\in\wF^{\times2}$ because $1+m\in
  L^{\times2}$ by Hensel's lemma. Moreover,
  $N_{M/\wF}(ux)=N_{M/\wF}(-u)\,(-x)^f=N_{M/\wF}(-u)N_{\wK/\wF}(\xi)^f$, hence
  from~\eqref{eq:N} it follows that
  \[
  N_{L/\wF}(y)\in N_{M/\wF}(ux)^r\cdot\wF^{\times2} \subset
  N_{M/\wF}(-u)^r\cdot  N(\wK/\wF).
  \]
  Since $\overline{u\overline{u}^{-1}}=1$, Hensel's lemma shows that
  $u\overline{u}^{-1}\in M^{\times2}$, hence $N_{M/\wF}(u)\equiv
  N_{\ell/k}(\overline u)\bmod\wF^{\times2}$. The first statement
  in~(b) is thus proved.

  To prove the second part, consider the minimal polynomial of $\pi$
  over $M$:
  \[
  X^e-a_1X^{e-1}+a_2X^{e-2}-\cdots+a_e\in M[X].
  \]
  Each coefficient $a_i$ is a sum of products of $i$ conjugates of
  $\pi$ in an algebraic closure of $L$, hence $v(a_i)\geq
  i\,v(\pi)$. But $a_i\in M$ and $v(M^\times) = e\,v(\pi)\mathbb{Z}$,
  hence in fact $v(a_i)\geq e\,v(\pi)$. Moreover, $v(a_e)=e\,v(\pi)$
  because $a_e=N_{L/M}(\pi)$, hence $v(a_ia_e^{-1})\geq0$ for $i=1$,
  \ldots, $e$. Therefore, taking residues in
  the equation
  \[
  \frac{\pi^e}{a_e} - \frac{a_1}{a_e}\pi^{e-1} +
  \frac{a_2}{a_e}\pi^{e-2} - \cdots+1=0,
  \]
  we obtain $\overline{\bigl(\frac{\pi^e}{a_e}\bigr)}=-1$. Note that
  $a_e=N_{L/M}(\pi)=xu$, hence
  $\overline{\bigl(\frac{\pi^e}{-xu}\bigr)}=1$. By Hensel's lemma
  again, it follows that $\frac{\pi^e}{-xu}\in L^{\times2}$, hence
  $-xu\in L^{\times2}$ because $e$ is even. Since $u\equiv\overline u
  \bmod M^{\times2}$, we finally get $x\equiv -\overline u\bmod
  L^{\times2}$. 
\end{proof}

We now turn to the problem mentioned at the beginning of this section,
which is to compute $\PGU(\wB,\wtau)/R$ in terms of $(A,\sigma)$. In
view of Proposition~\ref{prop:isohyp}, we assume $(A,\sigma)$ and
$(\wB,\wtau)$ are not hyperbolic for the rest of this section.

\begin{lemma}
  \label{lem:G}
  $
  G(\wB,\wtau)=N(\wK/\wF)\cdot G(A,\sigma).
  $
\end{lemma}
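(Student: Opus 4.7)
My plan is to prove the two inclusions separately. The containment $N(\wK/\wF)\cdot G(A,\sigma)\subseteq G(\wB,\wtau)$ is immediate: for a similitude $a$ of $(A,\sigma)$ the element $a\otimes 1\in\wB$ is a $\wtau$-similitude with the same multiplier $\mu(a)\in k^\times$, while for $\kappa\in\wK^\times$ the element $1\otimes\kappa\in\wB$ is a $\wtau$-similitude with multiplier $N_{\wK/\wF}(\kappa)$, and $G(\wB,\wtau)$ is a group.

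For the reverse inclusion, I would first reduce to the case where $(A,\sigma)$ is anisotropic. Take a Witt decomposition
\[
(A,\sigma)\simeq(A_0,\sigma_0)\boxplus(A_1,\sigma_1)
\]
with $\sigma_0$ anisotropic and $\sigma_1$ hyperbolic, as in the proof of Theorem~\ref{thm:hypcond}. Tensoring with $(\wK,\iota)$ yields $(\wB,\wtau)\simeq(\wB_0,\wtau_0)\boxplus(\wB_1,\wtau_1)$, where $\wtau_0$ is anisotropic by Proposition~\ref{prop:isohyp} and $\wtau_1$ is hyperbolic. Using the characterization of similitude multipliers $\lambda\in G(h)\Leftrightarrow\lambda h\simeq h$, together with the scaling invariance $\lambda h_1\simeq h_1$ of any hyperbolic hermitian form and Witt cancellation for hermitian forms (valid in characteristic different from $2$ for involutions of any type), one obtains $G(A,\sigma)=G(A_0,\sigma_0)$ and $G(\wB,\wtau)=G(\wB_0,\wtau_0)$. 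This reduces the claim to the anisotropic case.

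Assuming $(A,\sigma)$ is anisotropic, I take any $y\in\GU(\wB,\wtau)$ with $\wtau(y)y=\lambda\in\wF^\times$ and expand $y=\sum_{i\geq r}a_i\xi^i$ with $a_r\neq 0$. Matching the $\xi^{2r}$-coefficient in $\wtau(y)y=\lambda\in k((x))=k((\xi^2))$ gives $(-1)^r\sigma(a_r)a_r=c_r$, the coefficient of $x^r$ in $\lambda$. Anisotropy forces $\sigma(a_r)a_r\neq 0$, so $c_r\in k^\times$ and $a_r$ is a similitude of $(A,\sigma)$ with $\mu(a_r)=(-1)^r c_r$. Vanishing of the $\xi^{2m}$-coefficients for $m<r$ forces $v_x(\lambda)=r$, so $\lambda=c_rx^r u$ for some $u\in 1+xk[[x]]$. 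Since $u\in\wF^{\times 2}\subseteq N(\wK/\wF)$ by Hensel's lemma and $-x=N_{\wK/\wF}(\xi)$, one concludes
\[
\lambda\equiv(-1)^r c_r=\mu(a_r)\pmod{N(\wK/\wF)},
\]
giving $\lambda\in G(A,\sigma)\cdot N(\wK/\wF)$, as required.

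The main potential obstacle is the reduction step: verifying $G(A,\sigma)=G(A_0,\sigma_0)$ and $G(\wB,\wtau)=G(\wB_0,\wtau_0)$ relies on Witt cancellation for hermitian forms over involutive division algebras and on the observation that hyperbolic forms are absorbed by scaling. Once that is in place, the anisotropic case reduces to a direct leading-term computation in $k((\xi))$.
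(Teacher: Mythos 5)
Your proof is correct and follows essentially the same route as the paper: a Witt decomposition reduces to the anisotropic case, a leading-coefficient computation in the Laurent series expansion identifies the multiplier modulo $N(\wK/\wF)$, and Hensel's lemma absorbs the higher-order part. The paper leaves the reverse inclusion and the reduction step to the reader; you simply spell them out.
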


\begin{proof}
  Consider a Witt decomposition
  $(A,\sigma)\simeq(A_0,\sigma_0)\boxplus(A_1,\sigma_1)$ with
  $\sigma_0$ anisotropic and $\sigma_1$ hyperbolic. Then
  $(A_1,\sigma_1)\otimes_k(\wK,\iota)$ 
  is hyperbolic, hence
  \[
  G(\wB,\wtau)=G\bigl((A_0,\sigma_0)\otimes_k(\wK,\iota)\bigr)
  \quad\text{and similarly}\quad
  G(A,\sigma)=G(A_0,\sigma_0).
  \]
  Therefore, substituting $(A_0,\sigma_0)$ for $(A,\sigma)$ we may
  assume $\sigma$ is anisotropic.

  Let $g=\sum_{i=r}^\infty a_i\xi^i\in \GU(\wB,\wtau)$, with $a_i\in A$
  for all $i$, and $a_r\neq0$. Because $\wtau(g)g\in \wF^\times$ and
  $\sigma$ is anisotropic, we have $\sigma(a_r)a_r\in k^\times$. Then
  $x^{-r}(\sigma(a_r)a_r)^{-1}\wtau(g)g\in k[[x]]$, and
  \[
  \wtau(g)g=\sigma(a_r)a_r(-x)^r(1+m) \qquad\text{for some $m\in
    x\,k[[x]]$.}
  \]
  Hensel's lemma yields $1+m\in\wF^{\times2}$, hence $(-x)^r(1+m)\in
  N(\wK/\wF)$. Since $\sigma(a_r)a_r\in G(A,\sigma)$, it follows that
  $G(\wB,\wtau)\subset N(\wK/\wF)\cdot G(A,\sigma)$. The reverse
  inclusion is clear.
\end{proof}

We next consider $\Hyp(\wB,\wtau)$.

\begin{lemma}
  \label{lem:Hyp}
  Let $L$ be a finite-degree field extension of $\wF$ such that
  $(\wB,\wtau)_L$ is hyperbolic, and let $\ell$ be the residue field
  of $L$. The following properties hold:
  \begin{enumerate}
  \item[(a)]
  $[L:\wF]$ is even.
  \item[(b)]
  If $L$ is unramified, then $N(L/\wF)\subset
  N(\wK/\wF)\cdot\Hyp(A,\sigma)$. 
  \item[(c)]
  If $N(L/\wF)\not\subset N(\wK/\wF)\cdot\Hyp(A,\sigma)$, then there
  exist $\lambda\in\ell^\times$ and $g\in A_\ell$ such that
  $\sigma_\ell(g)=g$, $g^2=\lambda$, and
  $
  N(L/\wF)\subset N(\wK/\wF)\cdot\{1,N_{\ell/k}(\lambda)\}.
  $
  \end{enumerate}
\end{lemma}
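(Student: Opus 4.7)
The plan is to deduce (a) from the odd-degree extension theorem for unitary involutions, and to handle (b) and (c) by combining Lemma~\ref{lem:norms} with Proposition~\ref{prop:isohyp} (to descend hyperbolicity from $L$ to its residue field $\ell$) and Theorem~\ref{thm:hypcond} (to produce the required square-central symmetric element). For (a): if $[L:\wF]$ were odd then $\wK\not\subset L$, so $\wK\otimes_{\wF}L$ remains a field and $(\wB,\wtau)_L$ is still a bona fide unitary involution; the Bayer--Fluckiger odd-degree extension theorem for hermitian forms then forces $(\wB,\wtau)$ itself to be hyperbolic, contradicting the standing assumption.

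For (b), the unramified hypothesis gives $L=\ell((x))$ and $\wK\otimes_{\wF}L=\ell((\xi))$, so $(\wB,\wtau)_L$ is precisely the ``$(\wB,\wtau)$''-construction applied to $(A,\sigma)_\ell$ over $\ell$. Proposition~\ref{prop:isohyp}(a')$\Leftrightarrow$(b'), applied with $\ell$ and $L$ playing the roles of $k$ and $\wF$, then forces $(A,\sigma)_\ell$ to be hyperbolic, whence $N(\ell/k)\subset\Hyp(A,\sigma)$; since $[L:\wF]$ is even by (a) and $e=1$, Lemma~\ref{lem:norms}(a) gives the claim. To start (c), I first reduce to $e$ even: if $e$ were odd, the unramified closure $M$ of $\wF$ in $L$ satisfies $[L:M]=e$ odd, so odd-degree descent makes $(\wB,\wtau)_M$ hyperbolic, (b) applied to $M$ yields $N(M/\wF)\subset N(\wK/\wF)\cdot\Hyp(A,\sigma)$, and the standard inclusion $N(L/\wF)\subset N(M/\wF)$ contradicts the hypothesis.

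With $e$ even, Lemma~\ref{lem:norms}(b) supplies both $x\equiv-\overline u\bmod L^{\times2}$ and $N(L/\wF)\subset N(\wK/\wF)\cdot\{1,N_{\ell/k}(-\overline u)\}$, which dictates the choice $\lambda=-\overline u$. If $\lambda$ were a square in $\ell$ then $N_{\ell/k}(\lambda)\in k^{\times2}\subset N(\wK/\wF)$, again violating the hypothesis; hence $\ell':=\ell(\sqrt{\lambda})$ is a genuine (unramified) quadratic extension of $\ell$, and $\wK\otimes_{\wF}L\simeq\ell'((\pi))$ is an unramified quadratic extension of $L$. Rewriting $(\wB,\wtau)_L$ as the scalar extension to $L$ of the unitary algebra $(A,\sigma)_\ell\otimes_\ell(\ell',\iota'|_\ell)$ and applying Proposition~\ref{prop:isohyp}(c')$\Leftrightarrow$(d') (with $\ell$ as base field) descends hyperbolicity to $(A,\sigma)_\ell\otimes_\ell(\ell',\iota'|_\ell)$; Theorem~\ref{thm:hypcond} then produces the desired symmetric $g\in A_\ell$ with $g^2=\lambda$.

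The main obstacle is the exceptional case of Theorem~\ref{thm:hypcond}, in which $A_\ell$ is split of degree $\equiv2\bmod4$ with $\sigma_\ell$ symplectic and the converse direction of that theorem can fail, so the argument in the previous paragraph does not produce $g$. I handle it through the observation that symplectic involutions on split algebras are hyperbolic: in that case $(A,\sigma)_\ell$ is itself hyperbolic, so $N(\ell/k)\subset\Hyp(A,\sigma)$, and Lemma~\ref{lem:norms}(b) then forces $N(L/\wF)\subset N(\wK/\wF)\cdot\Hyp(A,\sigma)$, contradicting the hypothesis of (c). Thus the exceptional situation never arises under the hypothesis of (c), and the proof is complete.
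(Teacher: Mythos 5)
Your proposal is correct and follows essentially the same route as the paper: Bayer--Fluckiger--Lenstra odd-degree descent for (a), identification of $(\wB,\wtau)_L$ over the unramified closure with the analogous construction over $\ell$ followed by Proposition~\ref{prop:isohyp} and Lemma~\ref{lem:norms}(a) for (b), and for (c) the reduction to $e$ even, the choice $\lambda=-\overline u$, the rewriting of $(\wB,\wtau)_L$ as a scalar extension of $(A,\sigma)_\ell\otimes_\ell(\ell(\sqrt{-\overline u}),\alpha)$, and Theorem~\ref{thm:hypcond} to produce $g$. Your handling of the exceptional case of Theorem~\ref{thm:hypcond} is stated a bit differently (you derive the forbidden inclusion $N(L/\wF)\subset N(\wK/\wF)\cdot\Hyp(A,\sigma)$ directly from $(A,\sigma)_\ell$ hyperbolic via Lemma~\ref{lem:norms}(b), while the paper observes that $(\wB,\wtau)_M$ would then be hyperbolic), but the two arguments are logically equivalent.
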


\begin{proof}
  (a) Since $(\wB,\wtau)$ is not hyperbolic, it follows from a theorem
  of Bayer-Fluckiger and Lenstra \cite[Prop.~1.2]{BL90} that
  $(\wB,\wtau)$ remains non-hyperbolic over every odd-degree extension
  of $\wF$. 
\smallbreak\par\noindent
  (b)
  If $L$ is unramified, then we may identify $L=\ell((x))$. By
  applying Proposition~\ref{prop:isohyp} after extending scalars of
  $A$ from $k$ to $\ell$, we see that $(A,\sigma)_\ell$ is
  hyperbolic. Therefore, $N(\ell/k)\subset\Hyp(A,\sigma)$, and by
  Lemma~\ref{lem:norms}(a) it follows that $N(L/\wF)\subset
  N(\wK/\wF)\cdot\Hyp(A,\sigma)$. 
\smallbreak\par\noindent
  (c)
  Let $M\subset L$ be the unramified closure of $\wF$ in $L$. If
  $(\wB,\wtau)_M$ is hyperbolic, then~(b) yields $N(M/\wF)\subset
  N(\wK/\wF)\cdot\Hyp(A,\sigma)$. But $N(L/\wF)\subset N(M/\wF)$,
  hence this case does not arise when $N(L/\wF)\not\subset
  N(\wK/\wF)\cdot\Hyp(A,\sigma)$. Therefore, the hypothesis implies
  $(\wB,\wtau)_M$ is not hyperbolic. From the theorem of
  Bayer-Fluckiger and Lenstra mentioned in~(a), it follows that
  $[L:M]$ is even, hence we may apply Lemma~\ref{lem:norms}(b) to
  obtain (with the notation of that lemma)
  \begin{equation}
  \label{eq:u}
  N(L/\wF)\subset N(\wK/\wF)\cdot\{1,N_{\ell/k}(-\overline u)\}.
  \end{equation}
  To complete the proof, we show that $\lambda=-\overline u$ satisfies
  the requirements.

  First, note that $-\overline u\notin \ell^{\times2}$ since otherwise
  \eqref{eq:u} yields $N(L/\wF)\subset
  N(\wK/\wF)$. Lemma~\ref{lem:norms}(b) 
  shows that $x\equiv-\overline u \bmod L^{\times2}$, hence
  \[
  \wK L\simeq L(\sqrt{-\overline u}) \simeq \ell(\sqrt{-\overline
    u})((\pi)).
  \]
  After scalar extension to $L$, the automorphism $\iota$ of $\wK$
  yields the nontrivial automorphism $\alpha$ of
  $\ell(\sqrt{-\overline u})((\pi))$ over $\ell((\pi))$. Therefore,
  \[
  (\wB,\wtau)_L\simeq (A,\sigma)_\ell \otimes_\ell
  (\ell(\sqrt{-\overline u}),\alpha) \otimes_\ell \ell((\pi)).
  \]
  Since $(\wB,\wtau)_L$ is hyperbolic, it follows from the equivalence
  of (c') and (d') in Proposition~\ref{prop:isohyp} that
  $(A,\sigma)_\ell \otimes_\ell(\ell(\sqrt{-\overline u}),\alpha)$ is
  hyperbolic. Note that we are 
  not in the exceptional case of Theorem~\ref{thm:hypcond}, for if
  $A_\ell$ is split and $\sigma_\ell$ is symplectic then
  $(A,\sigma)_\ell$ is hyperbolic, hence $(\wB,\wtau)_M$ is
  hyperbolic. Therefore, Theorem~\ref{thm:hypcond} yields an element
  $g\in A_\ell$ such that $\sigma_\ell(g)=g$ and $g^2=-\overline u$,
  which completes the proof.
\end{proof}

In order to account for case~(c) of Lemma~\ref{lem:Hyp}, we introduce
the following group $S(A,\sigma)$:
\begin{dfn}
  $S(A,\sigma)\subset k^\times$ is the subgroup generated by the
  elements $N_{\ell/k}(\lambda)$, where $\ell$ is a finite-degree
  field extension of $k$ and $\lambda\in\ell^\times$ is such that
  there exists $g\in A_\ell$ satisfying $\sigma_\ell(g)=g$ and
  $g^2=\lambda$. 
\end{dfn}

Note that the conditions $\sigma_\ell(g)=g$ and $g^2=\lambda$ imply
$\sigma_\ell(g)g=\lambda$, hence $\lambda\in
G(A_\ell,\sigma_\ell)$. By \cite[(12.21)]{KMRT98} it follows that
$N_{\ell/k}(\lambda)\in G(A,\sigma)$, hence $S(A,\sigma)\subset
G(A,\sigma)$. 

\begin{theorem}
  \label{thm:PGU/R}
  There is a canonical group isomorphism
  \[
  G(\wB,\wtau)\big/\bigl(N(\wK/\wF)\cdot \Hyp(\wB,\wtau)\bigr) \simeq
  G(A,\sigma)\big/\bigl(k^{\times2}\cdot\Hyp(A,\sigma)\cdot
  S(A,\sigma)\bigr). 
  \]
\end{theorem}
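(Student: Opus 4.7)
The plan is to analyze the homomorphism
$$\varphi\colon G(A,\sigma)\longrightarrow G(\wB,\wtau)\big/\bigl(N(\wK/\wF)\cdot\Hyp(\wB,\wtau)\bigr)$$
induced by the inclusion $k^\times\hookrightarrow\wF^\times$ followed by the canonical projection. Lemma~\ref{lem:G} gives $G(\wB,\wtau)=N(\wK/\wF)\cdot G(A,\sigma)$, so $\varphi$ is surjective; the theorem then reduces to identifying $\ker\varphi$ with $k^{\times 2}\cdot\Hyp(A,\sigma)\cdot S(A,\sigma)$, which I would establish by proving the two inclusions separately.

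For $k^{\times 2}\cdot\Hyp(A,\sigma)\cdot S(A,\sigma)\subset\ker\varphi$, I treat each factor in turn. Squares in $k^\times$ are squares in $\wF^\times$, hence lie in $N(\wK/\wF)$. For a generator $N_{\ell/k}(a)$ of $\Hyp(A,\sigma)$ with $(A,\sigma)_\ell$ hyperbolic, Proposition~\ref{prop:isohyp} applied with $\ell$ replacing $k$ shows that $(\wB,\wtau)_{\ell\wF}$ is hyperbolic, so $N_{\ell/k}(a)=N_{\ell\wF/\wF}(a)\in\Hyp(\wB,\wtau)$. The essential case is $S(A,\sigma)$: given $\ell/k$ finite, $\lambda\in\ell^\times$, and $g\in A_\ell$ with $\sigma_\ell(g)=g$ and $g^2=\lambda$, if $\lambda\in\ell^{\times 2}$ then $N_{\ell/k}(\lambda)\in k^{\times 2}$ and we are done; otherwise I construct the totally ramified quadratic extension $L=\ell((\pi))$ of $M=\ell((x))$ with $\pi^2=\lambda x$. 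Over $L$ one has $x\equiv\lambda\pmod{L^{\times 2}}$, so $\wK\otimes_\wF L\simeq L(\sqrt\lambda)$, and the rule $\sqrt\lambda\mapsto g$ defines an embedding $(L(\sqrt\lambda),\id)\hookrightarrow(A,\sigma)_L$. The unconditional forward direction of Theorem~\ref{thm:hypcond} then forces $(\wB,\wtau)_L$ to be hyperbolic, hence $N(L/\wF)\subset\Hyp(\wB,\wtau)$. A direct computation gives
$$N_{L/\wF}(\pi)=(-1)^{[\ell:k]}\,N_{\ell/k}(\lambda)\,x^{[\ell:k]},$$
and since $-x=N_{\wK/\wF}(\xi)\in N(\wK/\wF)$, this places $N_{\ell/k}(\lambda)$ in $N(\wK/\wF)\cdot N(L/\wF)\subset N(\wK/\wF)\cdot\Hyp(\wB,\wtau)$.

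For the reverse inclusion $\ker\varphi\subset k^{\times 2}\cdot\Hyp(A,\sigma)\cdot S(A,\sigma)$, I invoke Lemma~\ref{lem:Hyp}: every generator $N(L/\wF)$ of $\Hyp(\wB,\wtau)$ lies either in $N(\wK/\wF)\cdot\Hyp(A,\sigma)$ (cases~(b) and the alternative of~(c)) or in $N(\wK/\wF)\cdot\{1,N_{\ell/k}(\lambda)\}\subset N(\wK/\wF)\cdot S(A,\sigma)$ (remaining subcase of~(c), since $N_{\ell/k}(\lambda)\in S(A,\sigma)$ by the very conditions imposed on $\lambda$). Taking products over generators,
$$N(\wK/\wF)\cdot\Hyp(\wB,\wtau)\subset N(\wK/\wF)\cdot\Hyp(A,\sigma)\cdot S(A,\sigma).$$
If $\alpha\in k^\times$ belongs to this product, writing $\alpha=n\cdot h\cdot s$ with $h,s\in k^\times$ forces $n=\alpha h^{-1}s^{-1}\in k^\times\cap N(\wK/\wF)$. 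A short Laurent-series calculation in $\wK=k((\xi))$ identifies this intersection with $k^{\times 2}$: writing $y=\xi^r u$ with $u\in k[[\xi]]^\times$ of constant term $u_0$, one computes $N_{\wK/\wF}(y)=(-x)^r\,u\,\iota(u)$, and requiring the result to lie in $k$ forces $r=0$ and $u\,\iota(u)=u_0^2$. Hence $\alpha\in k^{\times 2}\cdot\Hyp(A,\sigma)\cdot S(A,\sigma)$, finishing the identification of the kernel.

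The main obstacle I anticipate is the construction of $L=\ell((\pi))$ with $\pi^2=\lambda x$ in the second paragraph: it has to play a double role, simultaneously supplying the embedding required by Theorem~\ref{thm:hypcond} and producing an element of $L$ whose norm down to $\wF$ coincides with $N_{\ell/k}(\lambda)$ modulo $N(\wK/\wF)$. Once this ramified extension is set up correctly, the remaining ingredients---Proposition~\ref{prop:isohyp} to transfer hyperbolicity of $(A,\sigma)_\ell$ to $(\wB,\wtau)_{\ell\wF}$, Lemma~\ref{lem:Hyp} for the reverse inclusion, and the identification $k^\times\cap N(\wK/\wF)=k^{\times 2}$---are routine verifications.
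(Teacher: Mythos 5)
Your proof is correct and takes essentially the same route as the paper's: surjectivity from Lemma~\ref{lem:G}, the $L=\ell((\pi))$ construction with $\pi^2=\lambda x$ together with Theorem~\ref{thm:hypcond} to show $S(A,\sigma)\subset N(\wK/\wF)\cdot\Hyp(\wB,\wtau)$, Lemma~\ref{lem:Hyp} for the reverse inclusion, and the identification $k^\times\cap N(\wK/\wF)=k^{\times2}$ to pin down the kernel. The only cosmetic difference is that you package the argument as a kernel computation for an explicit map $\varphi$, whereas the paper first proves the subgroup equality $N(\wK/\wF)\cdot\Hyp(\wB,\wtau)=N(\wK/\wF)\cdot\Hyp(A,\sigma)\cdot S(A,\sigma)$ and then applies the standard isomorphism theorem to the inclusion $G(A,\sigma)\subset N(\wK/\wF)\cdot G(A,\sigma)$.
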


\begin{proof}
  Lemma~\ref{lem:Hyp} shows that $N(\wK/\wF)\cdot \Hyp(\wB,\wtau)
  \subset N(\wK/\wF)\cdot \Hyp(A,\sigma)\cdot S(A,\sigma)$. We show
  that this inclusion is an equality.

  Every field extension that makes $(A,\sigma)$ hyperbolic also makes
  $(\wB,\wtau)$ hyperbolic, hence 
  \[
  \Hyp(A,\sigma)\subset\Hyp(\wB,\wtau).
  \]
  Now, assume $\ell$ is a field extension of $k$ of
  finite degree~$f$, and $\lambda\in\ell^\times$, $g\in A_\ell$
  satisfy $\sigma_\ell(g)=g$ and $g^2=\lambda$. If
  $\lambda\in\ell^{\times2}$, then $N_{\ell/k}(\lambda)\in
  k^{\times2}\subset N(\wK/\wF)$. If $\lambda\notin\ell^{\times2}$, then
  Theorem~\ref{thm:hypcond} shows that
  $(A,\sigma)_\ell\otimes(\ell(\sqrt\lambda),\alpha)$ is hyperbolic,
  where $\alpha$ is the nontrivial automorphism of
  $\ell(\sqrt\lambda)$ over $\ell$. Let $L=\ell((\pi))$ where
  $\pi^2=\lambda x$. Then
  \[
  (A,\sigma)_\ell\otimes(\ell(\sqrt\lambda),\alpha)\otimes_\ell L
  \simeq (\wB,\wtau)_L,
  \]
  hence $(\wB,\wtau)_L$ is hyperbolic. Moreover, $N_{L/\wF}(\pi)=
  N_{\ell((x))/\wF}(-\lambda x) = (-x)^f N_{\ell/k}(\lambda)$. Since
  $N_{\wK/\wF}(\xi)=-x$, it follows that $N_{\ell/k}(\lambda)\in
  N(\wK/\wF)\cdot N(L/\wF) \subset N(\wK/\wF)\cdot\Hyp(\wB,\wtau)$. We
  have thus 
  shown $S(A,\sigma)\subset N(\wK/\wF)\cdot \Hyp(\wB,\wtau)$, hence
  \[
  N(\wK/\wF)\cdot \Hyp(\wB,\wtau) = N(\wK/\wF)\cdot \Hyp(A,\sigma)
  \cdot S(A,\sigma).
  \]
  On the other hand, Lemma~\ref{lem:G} yields
  $G(\wB,\wtau)=N(\wK/\wF)\cdot G(A,\sigma)$. Since $N(\wK/\wF)\cap
  k^\times = k^{\times2}$, it follows that the inclusion
  $G(A,\sigma)\subset 
  N(\wK/\wF)\cdot G(A,\sigma)$ induces an isomorphism
  \[
  \frac{G(A,\sigma)}{k^{\times2}\cdot\Hyp(A,\sigma)\cdot S(A,\sigma)}
  \stackrel\sim\to  \frac{N(\wK/\wF)\cdot G(A,\sigma)}{N(\wK/\wF)\cdot
    \Hyp(A,\sigma)\cdot S(A,\sigma)} =
  \frac{G(\wB,\wtau)}{N(\wK/\wF)\cdot\Hyp(\wB,\wtau)}.
  \qedhere
  \]
\end{proof}

We conclude this section with two special cases:

\begin{corollary}
  \label{cor:symp}
  Suppose $\deg A\equiv2\bmod4$ and $\sigma$ is symplectic. Then
  \[
  S(A,\sigma)\subset k^{\times2}\qquad\text{and}\qquad 
  \PGU(\wB,\wtau)/R=1.
  \]
\end{corollary}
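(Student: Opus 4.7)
The plan is to establish the two claims in sequence, with the first feeding into the proof of the second via Theorem~\ref{thm:PGU/R}.

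For the first claim, $S(A,\sigma)\subset k^{\times2}$: let $\ell/k$ be a finite-degree field extension, $\lambda\in\ell^\times$, and suppose $g\in A_\ell$ satisfies $\sigma_\ell(g)=g$ and $g^2=\lambda$. The idea is to exploit the Pfaffian characteristic polynomial of the symmetric element $g$ under the symplectic involution $\sigma_\ell$. Since $\deg A_\ell=2n$ with $n$ odd, there is a polynomial $P(X)\in\ell[X]$ of degree $n$ whose square equals the reduced characteristic polynomial of $g$ (see \cite[(2.9)]{KMRT98}). The identity $g^2=\lambda$ forces the eigenvalues of $g$ over an algebraic closure of $\ell$ to be roots of $X^2-\lambda$; if $\lambda\notin\ell^{\times2}$, then $X^2-\lambda$ is irreducible over $\ell$ and Galois invariance of the reduced characteristic polynomial forces the two conjugate roots $\pm\sqrt\lambda$ to occur with equal multiplicity, giving $P(X)^2=(X^2-\lambda)^n$ in $\ell[X]$. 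Unique factorization in $\ell[X]$ then forces $P(X)$ to be a scalar multiple of $(X^2-\lambda)^{n/2}$, contradicting that $n$ is odd. Hence $\lambda\in\ell^{\times2}$, so $N_{\ell/k}(\lambda)\in k^{\times2}$, and therefore $S(A,\sigma)\subset k^{\times2}$.

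For the second claim, $\PGU(\wB,\wtau)/R=1$: by Theorem~\ref{thm:PGU/R} and the first claim it suffices to prove $G(A,\sigma)\subset k^{\times2}\cdot\Hyp(A,\sigma)$. For any similitude $g$ of $(A,\sigma)$ with multiplier $\mu=\sigma(g)g$, the symplectic analogue of the similitude formula gives $\Nrd_A(g)=\mu^n$ (all symplectic similitudes are proper). Since $n$ is odd, $\mu^{n-1}\in k^{\times2}$, so $\mu\equiv\Nrd_A(g)\bmod k^{\times2}$, and consequently $G(A,\sigma)\subset\Nrd_A(A^\times)\cdot k^{\times2}$.

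It remains to show $\Nrd_A(A^\times)\subset\Hyp(A,\sigma)$. Writing $A\simeq M_r(D)$ with $D$ a central division algebra over $k$, we have $\Nrd_A(A^\times)=\Nrd_D(D^\times)$. For any $d\in D^\times$, extending $k(d)$ to a maximal subfield $L$ of $D$ yields $\Nrd_D(d)=N_{L/k}(d)\in N(L/k)$. As a maximal subfield of $D$, $L$ splits $D$ and hence also $A$, so $\sigma_L$ is a symplectic involution on the split algebra $A_L$ and is therefore hyperbolic. Thus $L$ hyperbolizes $(A,\sigma)$, giving $N(L/k)\subset\Hyp(A,\sigma)$ and completing the proof. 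The main obstacle is the parity argument in the first paragraph: one must verify that $(X^2-\lambda)^n$ cannot be a square in $\ell[X]$ when $\lambda$ is a non-square and $n$ is odd. The rest of the argument is a routine assembly of standard facts about reduced norms, symplectic similitudes, and hyperbolizing field extensions.
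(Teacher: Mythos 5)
Your argument for the first inclusion $S(A,\sigma)\subset k^{\times2}$ is the paper's argument, just spelled out: the paper observes in one line that the reduced Pfaffian characteristic polynomial of $g$ has odd degree $n$, which forces $\lambda\in\ell^{\times2}$, and your unique-factorization computation with $P(X)^2=(X^2-\lambda)^n$ is precisely the justification behind that one line.

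For the second statement you genuinely diverge from the paper. After reducing via Theorem~\ref{thm:PGU/R} to proving $G(A,\sigma)\subset k^{\times2}\cdot\Hyp(A,\sigma)$, the paper simply cites Merkurjev's result \cite[Prop.~4]{Mer96} that $\gPGSp(A,\sigma)$ is stably rational (hence $R$-trivial), so by \eqref{eq:MerkPG0/R} the quotient $G(A,\sigma)/(k^{\times2}\cdot\Hyp(A,\sigma))$ vanishes. You instead give a direct, self-contained computation: $\Nrd_A(g)=\mu^n$ with $n$ odd forces $\mu\equiv\Nrd_A(g)\bmod k^{\times2}$, and $\Nrd_A(A^\times)=\Nrd_D(D^\times)\subset\Hyp(A,\sigma)$ because any $d\in D^\times$ lies in a maximal subfield $L$ of $D$, which splits $A$ and hence hyperbolizes the symplectic $\sigma$. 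This is correct, and in effect reproves the relevant special case of Merkurjev's theorem from scratch. The trade-off: the paper's proof is shorter but leans on an external stable-rationality result valid in all symplectic degrees, while yours is elementary and explicit but hinges on the parity of $n$ (so it is tailored to the hypothesis $\deg A\equiv2\bmod4$ rather than recovering Merkurjev's result in full generality). Either route is acceptable.
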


\begin{proof}
  Let $\ell$ be a finite-degree field extension of $k$ and
  $\lambda\in\ell^\times$ be such that $g^2=\lambda$ for some
  $\sigma_\ell$-symmetric element $g\in A_\ell$. Since $\sigma_\ell$
  is symplectic and $\deg A_\ell\equiv2\bmod4$, the reduced Pfaffian
  characteristic polynomial of $g$ (see~\cite[(2.9)]{KMRT98}) has odd
  degree, hence $\lambda\in\ell^{\times2}$. Therefore,
  $S(A,\sigma)\subset k^{\times2}$, and Theorem~\ref{thm:PGU/R} yields
  \[
  G(\wB,\wtau)\big/\bigl(N(\wK/\wF)\cdot\Hyp(\wB,\wtau)\bigr) \simeq
  G(A,\sigma)\big/\bigl(k^{\times2}\cdot\Hyp(A,\sigma)\bigr).
  \]
  The right side is trivial because Merkurjev has shown
  \cite[Prop.~4]{Mer96} that $\gPGSp(A,\sigma)$ is stably
  rational. By~\eqref{eq:MerkPGU/R}, it follows that
  $\PGU(\wB,\wtau)/R=1$. 
\end{proof}

Recall that when the involution $\sigma$ is orthogonal, we let
$G^+(A,\sigma)$ denote the group of multipliers of proper similitudes,
and $G^-(A,\sigma)$ the coset of multipliers of improper similitudes
(if any).

\begin{lemma}
  \label{lem:normult}
  Let $\ell$ be a finite-degree field extension of $k$. Then
  $N_{\ell/k}\bigl(G^+(A_\ell,\sigma_\ell)\bigr) \subset
  G^+(A,\sigma)$. Moreover,
  \[
  N_{\ell/k}\bigl(G^-(A_\ell,\sigma_\ell)\bigr) \subset
  \begin{cases}
    G^+(A,\sigma)&\text{if $[\ell:k]$ is even,}\\
    G^-(A,\sigma)&\text{if $[\ell:k]$ is odd.}
  \end{cases}
  \]
\end{lemma}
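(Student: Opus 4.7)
The plan is to combine the Brauer-class criterion for proper versus improper similitudes recalled just before~\eqref{eq:propsimnothing} with the projection formula for corestriction of symbols. Fix $\delta\in k^\times$ representing the discriminant of $\sigma$, which also represents the discriminant of $\sigma_\ell$. Given $g\in\GO(A_\ell,\sigma_\ell)$, one already knows from \cite[(12.21)]{KMRT98} that $N_{\ell/k}(\mu(g))\in G(A,\sigma)$, so there is some $h\in\GO(A,\sigma)$ with $\mu(h)=N_{\ell/k}(\mu(g))$; the whole question is thus whether a proper (resp.\ improper) $h$ with this multiplier exists.

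Write $\mu=\mu(g)$. By the criterion, the Brauer class $(\delta,\mu)_\ell\in\Br(\ell)$ vanishes if $g$ is proper and equals $[A_\ell]$ if $g$ is improper. The projection formula for the symbol yields
\[
\cor_{\ell/k}\bigl((\delta,\mu)_\ell\bigr)=\bigl(\delta,N_{\ell/k}(\mu)\bigr)_k\quad\text{in }\Br(k),
\]
and combining this with the identity $\cor_{\ell/k}\circ\res_{k/\ell}=[\ell:k]$ on $\Br(k)$, one computes $(\delta,N_{\ell/k}(\mu))_k$ in each case. If $g\in\GO^+(A_\ell,\sigma_\ell)$ the class is trivial. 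If $g\in\GO^-(A_\ell,\sigma_\ell)$ the class equals $[\ell:k]\cdot[A]$, and since $A$ admits an involution of the first kind we have $2[A]=0$, so the class is $0$ when $[\ell:k]$ is even and $[A]$ when $[\ell:k]$ is odd.

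It remains to convert each Brauer-class outcome back into proper or improper membership, i.e.\ to establish a converse to the criterion: if $\mu'\in G(A,\sigma)$ and $(\delta,\mu')_k=0$ then $\mu'\in G^+(A,\sigma)$, while if $(\delta,\mu')_k=[A]$ then $\mu'\in G^-(A,\sigma)$. When $A$ is not split this is immediate from the forward criterion and~\eqref{eq:propsimnothing}: an element of $G(A,\sigma)$ lies in exactly one of $G^+$ or $G^-$, determined by its Brauer class. When $A$ is split the Brauer class no longer separates the two types, but one can multiply any similitude of $(A,\sigma)$ by a reflection of the associated quadratic form (available since any nondegenerate form of positive dimension has anisotropic vectors) to switch between proper and improper while preserving the multiplier, so $G^+(A,\sigma)=G^-(A,\sigma)=G(A,\sigma)$ and the claimed inclusions become tautological. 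Combining these observations with the two cases above gives the four inclusions asserted in the lemma. The only mild obstacle is the split case, which has to be handled separately but is settled by this short reflection argument.
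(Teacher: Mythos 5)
Your proof is correct and takes essentially the same route as the paper: apply the Brauer-class criterion for proper versus improper similitudes, take the corestriction via the projection formula, and read off the parity of $[\ell:k]$ from $[\ell:k]\cdot[A]$ using $2[A]=0$. The only difference is that the paper invokes the criterion as an ``if and only if'' (citing \cite{MT95} and \cite[(13.38)]{KMRT98}), whereas you reprove the converse direction explicitly, including the reflection argument in the split case; this is a reasonable expansion of a step the paper leaves implicit, not a different method.
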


\begin{proof}
  Let $\mu\in G(A_\ell,\sigma_\ell)$ and let $\delta\in k^\times$ be a
  representative of the square class $\disc\sigma$. Recall from
  \cite[Th.~A]{MT95} or \cite[(13.38)]{KMRT98} that
  \[
  \mu\in G^+(A_\ell,\sigma_\ell) \text{ if and only if }
  [(\delta,\mu)_\ell]=0 \text{ in $\Br(\ell)$,}
  \]
  and
  \[
  \mu\in G^-(A_\ell,\sigma_\ell) \text{ if and only if }
  [(\delta,\mu)_\ell] = [A_\ell] \text{ in $\Br(\ell)$.}
  \]
  Taking the corestriction from $\ell$ to $k$, we obtain
  $[(\delta,N_{\ell/k}(\mu))_k] = 0$ if $\mu\in
  G^+(A_\ell,\sigma_\ell)$, and $[(\delta,N_{\ell/k}(\mu))_k] =
  [\ell:k]\cdot [A]$ if $\mu\in
  G^-(A_\ell,\sigma_\ell)$. By~\cite[(12.21)]{KMRT98} we already know
  $N_{\ell/k}(\mu)\in G(A,\sigma)$; the lemma follows.
\end{proof}

\begin{corollary}
  \label{cor:orth}
  Suppose $\deg A\equiv0\bmod4$ and $\sigma$ is orthogonal. Then
  $S(A,\sigma)\subset G^+(A,\sigma)$ and there is a canonical
  surjective map $\varphi\colon\PGU(\wB,\wtau)/R\to
  G(A,\sigma)/G^+(A,\sigma)$. 
\end{corollary}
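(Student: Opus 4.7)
The strategy is to combine Merkurjev's isomorphism \eqref{eq:MerkPGU/R} with the isomorphism of Theorem~\ref{thm:PGU/R}, then surject via the natural quotient map onto $G(A,\sigma)/G^+(A,\sigma)$. This reduces the statement to proving
\[
k^{\times 2}\cdot\Hyp(A,\sigma)\cdot S(A,\sigma)\subset G^+(A,\sigma).
\]
The inclusions $k^{\times 2}\subset G^+(A,\sigma)$ and $\Hyp(A,\sigma)\subset G^+(A,\sigma)$ are the easy part: scalar similitudes are proper (and the multiplier is a group homomorphism, so $G^+(A,\sigma)$ is a subgroup of $k^\times$), and when $(A,\sigma)_\ell$ is hyperbolic one has $G^+(A_\ell,\sigma_\ell)=\ell^\times$ by \eqref{eq:orthyp}, so Lemma~\ref{lem:normult} places $N(\ell/k)$ inside $G^+(A,\sigma)$. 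The real work is in showing $S(A,\sigma)\subset G^+(A,\sigma)$.

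For this crux, take a generator $N_{\ell/k}(\lambda)$ of $S(A,\sigma)$ arising from $g\in A_\ell$ with $\sigma_\ell(g)=g$ and $g^2=\lambda$. If $\lambda\in\ell^{\times 2}$, then $N_{\ell/k}(\lambda)\in k^{\times 2}\subset G^+(A,\sigma)$ and there is nothing to do. Otherwise $X^2-\lambda$ is irreducible over $\ell$ and coincides with the minimal polynomial of $g$. I would then argue that the reduced characteristic polynomial $\chi_g\in\ell[X]$ of degree $2n=\deg A$ has the form $\chi_g(X)=(X^2-\lambda)^n$: after extending scalars to an algebraic closure, the only possible roots of $\chi_g$ are $\pm\sqrt\lambda$ (they are roots of the minimal polynomial of $g\otimes 1$), and their multiplicities $a$, $b$ with $a+b=2n$ must agree by the Galois-invariance of $\chi_g$. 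Reading off the constant term gives $\Nrd_{A_\ell}(g)=(-1)^n\lambda^n=\lambda^n$, since $n$ is even by the hypothesis $\deg A\equiv 0\bmod 4$. As $\mu(g)=g^2=\lambda$, this says $g\in\GO^+(A_\ell,\sigma_\ell)$, hence $\lambda\in G^+(A_\ell,\sigma_\ell)$, and Lemma~\ref{lem:normult} then delivers $N_{\ell/k}(\lambda)\in G^+(A,\sigma)$.

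Once this inclusion is in hand, the natural projection $G(A,\sigma)/(k^{\times 2}\cdot\Hyp(A,\sigma)\cdot S(A,\sigma))\twoheadrightarrow G(A,\sigma)/G^+(A,\sigma)$, composed with the isomorphisms of \eqref{eq:MerkPGU/R} and Theorem~\ref{thm:PGU/R}, yields the canonical surjection $\varphi$. The only delicate point is the characteristic-polynomial computation in the non-square case, and the parity hypothesis $\deg A\equiv 0\bmod 4$ enters exactly through the sign $(-1)^n=1$; the computation would flip to $\Nrd_{A_\ell}(g)=-\lambda^n$ if $n$ were odd, making $g$ improper and defeating the argument, which is consistent with the contrasting behaviour in Corollary~\ref{cor:symp}.
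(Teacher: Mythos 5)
Your proof is correct and follows the paper's own route: show $S(A,\sigma)\subset G^+(A,\sigma)$ by computing $\Nrd_{A_\ell}(g)$ for a $\sigma_\ell$-symmetric $g$ with $g^2=\lambda$, conclude that $g$ is a proper similitude, apply Lemma~\ref{lem:normult}, and then compose the natural quotient map with the isomorphisms of Theorem~\ref{thm:PGU/R} and \eqref{eq:MerkPGU/R}. Your explicit case split on whether $\lambda\in\ell^{\times2}$ is in fact a small but genuine refinement of the paper's argument: the paper asserts $\Nrd_{A_\ell}(g)=(-\lambda)^{\frac12\deg A}$ without qualification, but that identity can fail when $\lambda$ is a square (e.g.\ $g=\diag(1,1,1,-1)$ in $M_4(k)$ with the transpose involution has $g^2=1$ and $\Nrd(g)=-1\neq1$), and the reduced-characteristic-polynomial derivation $\chi_g=(X^2-\lambda)^n$ you give is exactly what justifies it in the only case where it is needed, $\lambda\notin\ell^{\times2}$.
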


\begin{proof}
  Let $\ell$ be a finite-degree field extension of $k$ and
  $\lambda\in\ell^\times$ be such that $g^2=\lambda$ for some
  $\sigma_\ell$-symmetric $g\in A_\ell$. Then
  $\lambda=\sigma_\ell(g)g$ and
  $\Nrd_{A_\ell}(g)=(-\lambda)^{\frac12\deg A}$. Since $\frac12\deg A$
  is even, it follows that $g$ is a proper similitude, hence
  $\lambda\in G^+(A_\ell,\sigma_\ell)$. Lemma~\ref{lem:normult} then
  yields $N_{\ell/k}(\lambda)\in G^+(A,\sigma)$, hence
  $S(A,\sigma)\subset G^+(A,\sigma)$. On the other hand, we have
  $\Hyp(A,\sigma)\subset G^+(A,\sigma)$ (see \eqref{eq:MerkPG0/R}),
  hence there is a canonical surjective map
  \[
  G(A,\sigma)\big/\bigl(k^{\times2}\cdot \Hyp(A,\sigma)\cdot
  S(A,\sigma)\bigr) \to G(A,\sigma)/G^+(A,\sigma).
  \] 
  The corollary follows from Theorem~\ref{thm:PGU/R}.
\end{proof}

\begin{remark}
  \label{rem:BMT}
  The cohomological invariant
  $\theta_2\colon\gPGU(\wB,\wtau) \to H^3(\bullet,\mu_2)$ defined in
  \cite[Prop.~11]{BMT04} yields a map
  $\theta_{2\widehat F}\colon \PGU(\wB,\wtau)\to H^3(\widehat
  F,\mu_2)$ that factors through $\varphi$. If
  $G^-(A,\sigma)\neq\varnothing$, its image is $\{0, (x)\cup[A]\}$,
  where $(x)\in H^1(\widehat F,\mu_2)$ is the square class of
  $x\in\widehat F^\times$, see \cite[Prop.~13]{BMT04}. Therefore, the
  map $\varphi$ is nontrivial if and only if the map
  $\PGU(\wB,\wtau)/R\to H^3(\widehat F,\mu_2)$ induced by
  $\theta_{2\widehat F}$ is nontrivial.
\end{remark}

\section{Examples of non-rational adjoint groups of type
  $^2\mathsf{A}_n$} 
\label{sec:examples}

\subsection{Case $n\equiv3\bmod4$}
\label{subsec:3mod4}

In this subsection, we prove Theorem~\ref{thm:A3}. Let $(A,\sigma)$ be
a central simple algebra with orthogonal involution of degree multiple
of $4$ over a field $k$ of characteristic~$0$, and let $F=k(x)$ be the
rational function field in one indeterminate over $k$. We let
$K=F(\sqrt x)$, write $\iota$ for the nontrivial automorphism of
$K/F$, and let
\[
(B,\tau)=(A,\sigma)\otimes_k(K,\iota).
\]
Thus, $(B,\tau)$ is a generic unitary extension of $(A,\sigma)$. We
assume $A$ is not split and $\disc\sigma$ is not
trivial. Proposition~\ref{prop:improper} yields a finitely generated
extension $k_1=k(y_1,\ldots,y_r)$ of $k$ such that $A_{k_1}$ is not
split and $(A,\sigma)_{k_1}$ admits improper similitudes. We may
assume $k_1$ and $K$ both lie in some field extension of $k$ and are
linearly disjoint over $k$, so we may consider the composite field
extensions
\[
F_1=k_1(x)=F(y_1,\ldots,y_r) \subset K_1=F_1(\sqrt x) =
K(y_1,\ldots,y_r).
\]
Let also $\widehat F_1=k_1((x))$ and $\widehat K_1=\widehat F_1(\sqrt
x)$. Because $\deg A\equiv0\bmod4$, Corollary~\ref{cor:orth} yields a
surjective map
\[
\PGU(B_{\widehat F_1},\tau_{\widehat F_1})/R \to
G(A_{k_1},\sigma_{k_1})/G^+(A_{k_1},\sigma_{k_1}).
\]
Since $A_{k_1}$ is not split and $(A,\sigma)_{k_1}$ admits improper
similitudes, the right side is not trivial
(see~\eqref{eq:propsimnothing}). We have thus found an 
extension $\widehat F_1$ of $F$ such that $\PGU(B_{\widehat F_1},
\tau_{\widehat F_1})/R\neq1$, which means that $\gPGU(B,\tau)$ is not
$R$-trivial. The proof of Theorem~\ref{thm:A3} is thus complete.

\subsection{Case $n\equiv1\bmod4$}

We start with the following construction, which will be iterated in
the sequel: $Q$ is a central quaternion division
algebra over an arbitrary field $E$ of characteristic zero. Let
$(V,h)$, $(V',h')$ be nondegenerate skew-hermitian spaces over
$Q$ (with respect to the conjugation involution on $Q$). Consider the
field of Laurent series in one indeterminate over $E$,
\[
\widehat E=E((t)),
\]
and let $\widehat Q=Q_{\widehat E}$, $(\widehat V,
\widehat h)=(V,h)_{\widehat E}$, $(\widehat V', \widehat h') =
(V',h')_{\widehat E}$ be the division algebra and skew-hermitian
spaces obtained by extending scalars from $E$ to $\widehat E$. We may
then form the following nondegenerate skew-hermitian space
over $\widehat Q$:
\[
(W,h_W)=(\widehat V\oplus \widehat V',\; \widehat h\perp \langle t
\rangle \widehat h').
\]

\begin{proposition}
  \label{prop:sim1}
  \begin{enumerate}
  \item[(1)]
  If $h$ and $h'$ are anisotropic, then $h_W$ is
  anisotropic.
  \item[(2)]
  Assume $h$ and $h'$ are anisotropic and not similar. If there
  exists $\widehat g\in 
  \End_{\widehat Q}W$ such that $\ad_{h_W}(\widehat g)=\widehat g$ and
  $\widehat g^2=\lambda$ for
  some $\lambda\in \widehat E^\times$, then there exist $\lambda_0\in
  E^\times$ and $g\in\End_QV$, $g'\in\End_QV'$ such that
  $\lambda\equiv\lambda_0\bmod \widehat E^{\times2}$,
  $\ad_{h}(g)=g$, $\ad_{h'}(g')=g'$, and $g^2={g'}^2=\lambda_0$.
  \end{enumerate}
\end{proposition}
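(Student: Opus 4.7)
The plan is a $t$-adic valuation analysis on $\widehat E = E((t))$, which extends to $\widehat Q = Q((t))$ and coefficient-wise to $\widehat V$, $\widehat V'$, and the spaces of $\widehat Q$-linear maps between them. For~(1), suppose $w = \widehat v + \widehat v' \in W$ satisfies $h_W(w,w) = \widehat h(\widehat v, \widehat v) + t\widehat h'(\widehat v', \widehat v') = 0$. If $\widehat v \neq 0$, writing $\widehat v = v_r t^r + \cdots$ with $v_r \in V$ nonzero, the leading term of $\widehat h(\widehat v, \widehat v)$ is $h(v_r, v_r)\, t^{2r}$, which is nonzero by anisotropy of $h$; hence $v\bigl(\widehat h(\widehat v, \widehat v)\bigr)$ is even. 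Symmetrically $v\bigl(t\widehat h'(\widehat v', \widehat v')\bigr)$ is odd if $\widehat v' \neq 0$. Since the two terms sum to zero their valuations must agree, forcing both to be infinite, so $\widehat v = \widehat v' = 0$.

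For~(2), I would decompose $\widehat g$ as a $2 \times 2$ block matrix relative to $W = \widehat V \oplus \widehat V'$. The self-adjointness of $\widehat g$ for $h_W = \widehat h \perp \langle t \rangle \widehat h'$ forces the shape
\[
\widehat g = \begin{pmatrix} a & t\, b_1 \\ b_1^* & d \end{pmatrix},
\]
with $a$ self-adjoint for $\widehat h$, $d$ self-adjoint for $\widehat h'$, and $b_1 \in \Hom_{\widehat Q}(\widehat V', \widehat V)$ with $b_1^*$ its adjoint with respect to $\widehat h$ and $\widehat h'$. The condition $\widehat g^2 = \lambda$ then yields the diagonal equations
\[
a^2 + t\, b_1 b_1^* = \lambda \qquad\text{and}\qquad d^2 + t\, b_1^* b_1 = \lambda.
\]
The key anisotropy lemma is that for an anisotropic $h$, a self-adjoint $T \in \End_Q V$ with $T^2 = 0$ vanishes, since $h(Tv, Tv) = h(v, T^2 v) = 0$ for all $v$. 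Applied to the leading coefficients of $a$ and $d$, this gives $v(a^2) = 2v(a)$ and $v(d^2) = 2v(d)$.

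Set $n = v(\lambda)$, $\alpha = v(a)$, $\delta = v(d)$, $\beta = v(b_1)$. In the first diagonal equation the two summands have valuations $2\alpha$ (even) and $\geq 2\beta + 1$ (odd when finite), so no cancellation occurs and $n$ equals the smaller of the two. In the case $\alpha \leq \beta$, the equation forces $n = 2\alpha$ and the leading coefficient identity $a_\alpha^2 = \lambda_n\,\id_V$, where $\lambda_n \in E^\times$ is the leading coefficient of $\lambda$. A parallel analysis of the second equation then forces $\delta = \alpha$ and $d_\delta^2 = \lambda_n\,\id_{V'}$. Setting $\lambda_0 = \lambda_n$, $g = a_\alpha$, $g' = d_\delta$ finishes this case: $n$ is even and $\lambda \equiv \lambda_0 \bmod \widehat E^{\times 2}$ by Hensel's lemma.

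The main obstacle is the complementary case $\alpha > \beta$, in which the parity of the first equation would give $n = 2\beta + 1$ odd. A priori $v(b_1 b_1^*) \geq 2\beta$ with possibly strict inequality, but if $b_{1,\beta} b_{1,\beta}^* = 0$ then $h'(b_{1,\beta}^* v, b_{1,\beta}^* w) = h(b_{1,\beta} b_{1,\beta}^* v, w) = 0$ for all $v, w$, so the image of $b_{1,\beta}^*$ is totally isotropic for the anisotropic $h'$ and hence zero, contradicting $b_{1,\beta} \neq 0$. Thus $b_{1,\beta} b_{1,\beta}^* \neq 0$, and leading-term analysis of the two diagonal equations gives $b_{1,\beta} b_{1,\beta}^* = \lambda_n\,\id_V$ and $b_{1,\beta}^* b_{1,\beta} = \lambda_n\,\id_{V'}$ with $\lambda_n \in E^\times$. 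But then $b_{1,\beta} \in \Hom_Q(V', V)$ is an $E$-rational similitude between $(V, h)$ and $(V', h')$, contradicting the hypothesis that $h$ and $h'$ are not similar. This is the decisive use of the ``not similar'' assumption.
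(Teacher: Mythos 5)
Your proof is correct and takes a genuinely different route from the paper's in the key step. Both arguments run on the $t$-adic valuation, and your part (1) is essentially the paper's parity argument. But for part (2) the paper introduces a $v$-norm $\nu_W$ on $W$, shows $\widehat g$ preserves it, and extracts $g$, $g'$ as leading-coefficient operators; crucially, to know $v(\lambda)$ is even it invokes the external result \cite[Prop.~2.3]{QMT2} (that multipliers of similitudes of $\ad_{h_W}$ have even value precisely when $h$ and $h'$ are not similar). You instead decompose $\widehat g$ into $2\times2$ blocks adapted to $W=\widehat V\oplus \widehat V'$ and read off the two diagonal equations $a^2+tb_1b_1^*=\lambda$ and $d^2+tb_1^*b_1=\lambda$. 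The dichotomy $v(a)\le v(b_1)$ versus $v(a)>v(b_1)$ replaces the citation: in the second case the leading block $b_{1,\beta}$ becomes an honest $E$-linear similitude from $(V',h')$ to $(V,h)$ with multiplier $\lambda_n$ (your two residue identities $b_{1,\beta}b_{1,\beta}^*=\lambda_n\id_V$ and $b_{1,\beta}^*b_{1,\beta}=\lambda_n\id_{V'}$ say exactly that), which contradicts the non-similarity hypothesis. This makes your proof self-contained where the paper's is not, and it makes explicit that the ``not similar'' hypothesis serves only to forbid the odd-valuation branch --- a transparency the paper's formulation hides inside the cited proposition. The price is a small amount of bookkeeping: you do need to verify that the leading coefficient of a self-adjoint block is self-adjoint (true because $\ad_{\widehat h}$ restricts to $\ad_h$ on constant coefficients), and that $b_{1,\beta}\ne 0$ forces $b_{1,\beta}b_{1,\beta}^*\ne 0$ and $b_{1,\beta}^*b_{1,\beta}\ne 0$ (your totally-isotropic-image argument handles this cleanly). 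Both checks are in your write-up, so the argument stands.
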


\begin{proof}
  Throughout the proof, we assume $h$ and $h'$ are anisotropic.
  Let $v$ be the $t$-adic valuation on $\widehat E$. We also write $v$
  for the valuation on $\widehat Q$ extending $v$. Observe that every
  vector $x\in \widehat V$ can be written as a 
  series $x=\sum_{i=r}^\infty x_it^i$ with $x_i\in V$ for all
  $i$. For such nonzero $x$, define $\nu(x)=\inf\{i\mid x_i\neq0\}$,
  and let $\nu(0)=\infty$. Similarly, for $x'=\sum_{j=s}^\infty
  x'_jt^j\in \widehat V'$, let $\nu'(x')=\inf\{j\mid x'_j\neq0\}$, and
  $\nu'(0)=\infty$. Finally, for $x\in\widehat V$ 
  and $x'\in\widehat V'$, let
  \[
  \nu_W(x+x')=\min\bigl\{\nu(x),{\textstyle\frac12}+\nu'(x')\bigr\}
  \in 
  {\textstyle\frac12}\mathbb{Z}\cup\{\infty\}.
  \]
  The map $\nu_W\colon W\to \frac12\mathbb{Z}\cup\{\infty\}$ is a
  $v$-norm on $W$ (see \cite[p.~83]{TW15}). Since $h$ and $h'$ are
  anisotropic, it follows that $\nu(x)=\frac12
  v\bigl(\widehat h(x,x)\bigr)$ for all $x\in\widehat V$ and
  $\nu'(x')=\frac12 v\bigl(\widehat h'(x',x')\bigr)$ for all
  $x'\in\widehat V'$. Therefore,
  \begin{equation}
  \label{eq:norm}
  \nu_W(w)={\textstyle\frac12}v\bigl(h_W(w,w)\bigr) \qquad\text{for
    all $w\in W$.}
  \end{equation}
  It follows that $h_W$ is anisotropic, proving~(1). Moreover, it is
  easy to see that 
  \begin{equation}
    \label{eq:compat}
    v\bigl(h_W(w_1,w_2)\bigr)\geq \nu_W(w_1)+\nu_W(w_2)
    \qquad\text{for all $w_1$, $w_2\in W$}.
  \end{equation}

  To prove~(2), let $\widehat g\in\End_{\widehat Q}W$ be such that
  $\ad_{h_W}(\widehat g)=\widehat g$ and $\widehat g^2=\lambda$.
  We then have $\lambda\in G(\End_{\widehat Q}W, \ad_{h_W})$, hence
  $v(\lambda)\in2\mathbb{Z}$ by \cite[Prop.~2.3]{QMT2} because $h$
  and $h'$ are not similar. We may then find $\lambda_0\in
  E^\times$, $r\in\mathbb{Z}$, and $m\in \widehat E$ with $v(m)>0$
  such that 
  $\lambda=\lambda_0t^{2r}(1+m)$. Hensel's lemma yields $1+m\in
  \widehat E^{\times2}$, hence $\lambda\equiv\lambda_0\bmod
  \widehat E^{\times2}$. If 
  $\lambda_1\in \widehat E^\times$ is such that
  $\lambda=\lambda_0\lambda_1^2$, 
  then substituting $\widehat g\lambda_1^{-1}$ for $\widehat g$ we may
  (and will) assume for the rest of the proof that $\widehat
  g^2=\lambda_0\in E^\times$. 

  The remaining claims can be established by the graded algebra
  technique of \cite[Sec.~3.2]{QMT1}. For the convenience of the
  reader, we give 
  an alternative more elementary argument.

  Since $\widehat g\in\End_{\widehat Q}W$ satisfies
  $\ad_{h_W}(\widehat g)=\widehat g$ and
  $\widehat g^2=\lambda_0\in E^\times$, it follows that
  \[
  h_W\bigl(\widehat g(w),\widehat g(w)\bigr)=\lambda_0h_W(w,w)
  \qquad\text{for all $w\in W$}, 
  \]
  hence by \eqref{eq:norm} $\nu_W\bigl(\widehat g(w)\bigr)=\nu_W(w)$
  for all $w\in W$. In particular, for $x\in V$ we have
  $\nu_W\bigl(\widehat g(x)\bigr)=0$
  unless $x=0$, hence there exist $g(x)\in V$ and $g_+(x)\in W$
  such that
  \[
  \widehat g(x)=g(x)+g_+(x) \quad\text{and}\quad
  \nu_W\bigl(g_+(x)\bigr)>0. 
  \]
  The elements $g(x)$ and $g_+(x)$ are uniquely determined by these
  conditions, and the map $g\colon V\to V$ is
  $Q$-linear. Similarly, for $x'\in V'$ we have
  $\nu_W\bigl(\widehat g(x')\bigr)=\frac12$ if $x'\neq0$, and we get
  uniquely determined elements $g'(x')\in V'$, $g'_+(x')\in W$ such
  that 
  \[
  \widehat g(x')=g'(x')+g'_+(x') \quad\text{and}\quad
  \nu_W\bigl(g'_+(x')\bigr) >  {\textstyle\frac12}.
  \]
  The map $g'$ belongs to $\End_QV'$.
  For $x_1$, $x_2\in V$ we have
  \[
  h_W(\widehat g(x_1),x_2) = h(g(x_1),x_2) + h_W(g_+(x_1),x_2)
  \]
  and \eqref{eq:compat} shows that
  $v\bigl(h_W(g_+(x_1),x_2)\bigr)>0$. Therefore, letting
  $\cO_{\widehat Q}$ denote the valuation ring of $\widehat Q$ and
  $\invo\colon\cO_{\widehat Q}\to Q$ the residue map, we have for all
  $x_1$, $x_2\in V$
  \[
  h(g(x_1),x_2) = \overline{h_W(\widehat g(x_1),x_2)}
  \quad\text{and similarly}\quad 
  h\bigl(x_1,g(x_2)\bigr) = \overline{h_W\bigl(x_1,\widehat
    g(x_2)\bigr)}.
  \]
  Since $\ad_{h_W}(\widehat g)=\widehat g$ and $\widehat
  g^2=\lambda_0$, it follows that 
  $\ad_{h}(g) = g$ and $g^2=\lambda_0$.

  Likewise, for $x'_1$, $x'_2\in V'$ we have
  \[
  h_W(\widehat g(x'_1),x'_2) =t\,h'(g'(x'_1),x'_2) +
                              h_W(g'_+(x'_1),x'_2)
  \]
  and \eqref{eq:compat} yields $v\bigl(h_W(g'_+(x'_1),x'_2)\bigr)>1$,
  hence for $x'_1$, $x'_2\in V'$, 
  \[
  h'(g'(x'_1),x'_2) = \overline{t^{-1}h_W(\widehat g(x'_1),x'_2)}
  \quad\text{and similarly}\quad 
  h'\bigl(x'_1,g'(x'_2)\bigr) =
  \overline{t^{-1}h_W\bigl(x'_1,g_W(x'_2)\bigr)}.
  \]
  Since $\ad_{h_W}(\widehat g)=\widehat g$ and $\widehat
  g^2=\lambda_0$, it follows that 
  $\ad_{h'}(g') = g'$ and ${g'}^2=\lambda_0$.
\end{proof}

For the application in Theorem~\ref{thm:main} below, we need to show
that Proposition~\ref{prop:sim1} still holds after an odd-degree
scalar 
extension. Let $L$ be an odd-degree field extension of
$\widehat E$. Extending scalars from $\widehat E$ to $L$, we obtain
the quaternion division algebra $\widehat Q_L$ over $L$, the $\widehat
Q_L$-vector space $W_L$ and the skew-hermitian form
$(h_W)_L$ on $W_L$. 

\begin{corollary}
  \label{cor:sim1}
  Assume
  $h$ and $h'$ are not similar and anisotropic. If there
  exists $\widehat g\in \End_{\widehat Q_L} W_L$ such that
  $\ad_{(h_W)_L}(\widehat g)=\widehat g$ and $\widehat g^2=\lambda$
  for some $\lambda\in L^\times$, then there exists an odd-degree field
  extension $L_0$ of $E$ contained in $L$, a scalar $\lambda_0\in
  L_0^\times$, and maps
  $g\in \End_{Q_{L_0}}(V)_{L_0}$,
  $g'\in\End_{Q_{L_0}}(V')_{L_0}$ such that
  $\lambda\equiv\lambda_0\bmod L^{\times2}$,
  $\ad_{h_{L_0}}(g)=g$, $\ad_{h'_{L_0}}(g')=g'$, and
  $g^2={g'}^2=\lambda_0$. 
\end{corollary}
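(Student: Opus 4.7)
The strategy is to reduce the corollary to Proposition~\ref{prop:sim1}(2) applied with $E$ replaced by the residue field $L_0$ of $L$ and the indeterminate $t$ replaced by a uniformizer of $L$.

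First, I analyze the structure of $L/\widehat E$. Since $\widehat E = E((t))$ is complete with residue field $E$ of characteristic zero and $L/\widehat E$ has odd degree, both the residue degree $f = [L_0:E]$ and the ramification index $e = [L:L_0((t))]$ are odd. Hence $L \simeq L_0((\pi))$, where $\pi$ is a uniformizer satisfying $\pi^e = ut$ for some unit $u \in L_0((t))^\times$; in particular $L_0$ is an odd-degree extension of $E$ contained in $L$. By the odd-degree descent theorem for skew-hermitian forms over a quaternion division algebra (Bayer-Fluckiger--Lenstra \cite[Prop.~1.2]{BL90}, already invoked in Lemma~\ref{lem:Hyp}), the forms $h_{L_0}$ and $h'_{L_0}$ remain anisotropic over $Q_{L_0}$; the analogous statement for similarity, asserting that $h_{L_0}$ and $h'_{L_0}$ are still non-similar, should follow from a parallel Springer-type argument. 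This step is what I view as the principal technical obstacle of the proof.

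Second, I rewrite $(h_W)_L$ in a form compatible with the setup of Proposition~\ref{prop:sim1}. Since $\pi^e = ut$ with $e$ odd, $\pi^{e-1}$ is a square in $L$; moreover $u/\bar u$ is a $1$-unit, hence a square in $L$ by Hensel's lemma, where $\bar u \in L_0^\times$ is the residue of $u$. Combining these, $t \equiv \pi\,\bar u^{-1} \pmod{L^{\times 2}}$, and setting $\tilde h' = \bar u^{-1}\,h'_{L_0}$ we obtain an isometry of skew-hermitian spaces over $\widehat Q_L = Q_{L_0} \otimes_{L_0} L$:
\[
(h_W)_L \simeq h_{L_0} \perp \langle \pi \rangle \tilde h'.
\]
Because $\bar u^{-1}$ is a central scalar in $Q_{L_0}$, $\tilde h'$ is anisotropic and non-similar to $h_{L_0}$ exactly when $h'_{L_0}$ is, and the adjoint involutions on $\End_{Q_{L_0}}(V'_{L_0})$ attached to $\tilde h'$ and $h'_{L_0}$ coincide.

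Finally, I apply Proposition~\ref{prop:sim1}(2) to the given $\widehat g \in \End_{\widehat Q_L}(W_L)$ in the setup above, with $(E, t, V, V', h, h')$ replaced by $(L_0, \pi, V_{L_0}, V'_{L_0}, h_{L_0}, \tilde h')$. The proposition directly produces $\lambda_0 \in L_0^\times$, $g \in \End_{Q_{L_0}}(V_{L_0})$ and $g' \in \End_{Q_{L_0}}(V'_{L_0})$ satisfying $\lambda \equiv \lambda_0 \pmod{L^{\times 2}}$, $\ad_{h_{L_0}}(g) = g$, $\ad_{\tilde h'}(g') = g'$ and $g^2 = {g'}^2 = \lambda_0$. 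The identification of adjoint involutions noted above yields $\ad_{h'_{L_0}}(g') = g'$, which completes the proof.
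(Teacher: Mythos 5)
Your strategy matches the paper's almost exactly: identify $L = L_0((\pi))$ with $L_0$ the residue field and $\pi$ a uniformizer, show $t \equiv \pi\cdot(\text{unit})\bmod L^{\times 2}$, rewrite $(h_W)_L \simeq h_L \perp \langle \pi \rangle (\text{unit-scaled }h')_L$, and apply Proposition~\ref{prop:sim1}(2) with the replaced base field, uniformizer, and forms. However, the step you flag as ``the principal technical obstacle'' --- that $h_{L_0}$ and $h'_{L_0}$ remain non-similar over $L_0$ --- is a genuine gap that the paper does fill, and it is not a routine Springer-type argument. The paper invokes a transfer argument due to Lewis \cite[Prop.~10]{Lew00}, which is precisely the ``parallel Springer-type argument'' you speculate should exist; your proposal is incomplete without it, because unlike Springer's theorem for isotropy, the odd-degree descent of \emph{non}-similarity for skew-hermitian forms is not standard and relies on a separate scaling-plus-transfer device.

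There is also a citation issue in the preceding step: for the claim that $h_{L_0}$ and $h'_{L_0}$ remain anisotropic over the odd-degree extension $L_0/E$, you cite Bayer-Fluckiger--Lenstra \cite[Prop.~1.2]{BL90}, but that result concerns descent of hyperbolicity (and is used for exactly that in Lemma~\ref{lem:Hyp}(a)). Anisotropy descent under odd-degree extensions for hermitian forms over a quaternion division algebra is the hermitian analogue of Springer's theorem, due to Parimala--Sridharan--Suresh \cite[Th.~3.5]{PSS01}, which is the reference the paper uses. Anisotropic does imply non-hyperbolic, but the converse fails, so \cite{BL90} does not yield the statement you need.

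Aside from these two points --- the missing Lewis transfer argument for non-similarity, and the incorrect citation for anisotropy descent --- your decomposition, the handling of the unit $\bar u$, and the observation that rescaling $h'_{L_0}$ by a central scalar does not change the adjoint involution all match the paper's proof.
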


\begin{proof}
  The $t$-adic valuation on $\widehat E$ extends uniquely to $L$
  because $\widehat E$ 
  is complete. Let $L_0$ be the residue field of $L$ and
  $\pi\in L$ be a uniformizer. Since $\cchar(E)=0$ we may identify
  $L=L_0((\pi))$, see \cite[Ch.~II, Th.~2]{Ser68}. Let
  $e=[L:L_0((t))]$ 
  and $f=[L_0:E]$ be the ramification index and residue
  degree. Since $ef=[L:\widehat E]$ is odd, both $e$ and $f$ are
  odd. We have $v(\pi^et^{-1})=0$, hence there exist $u\in L_0^\times$
  and $m\in L$ with $v(m)>0$ such that
  \[
  \pi^e=tu(1+m).
  \]
  Now, $1+m\in L^{\times2}$ by Hensel's lemma, and $e$ is odd, hence
  the last equation yields $t\equiv \pi u\bmod L^{\times2}$. Therefore,
  \[
  (h_W)_L \simeq \widehat h_L \perp \langle \pi u \rangle
  \widehat h'_L.
  \]
  Note $\widehat h_L =
  (h_{L_0})_{L_0((\pi))}$ and $\langle u\rangle
  \widehat h'_L = (\langle u\rangle
  h'_{L_0})_{L_0((\pi))}$. Since $f$ is odd, the anisotropic
  forms $h$ and $h'$ remain anisotropic under scalar extension to
  $L_0$ by a theorem of Parimala--Sridharan--Suresh
  \cite[Th.~3.5]{PSS01}. Moreover, since $h$ and $h'$ are not
  similar, a transfer argument due to Lewis \cite[Prop.~10]{Lew00}
  shows 
  that $h_{L_0}$ and $h'_{L_0}$ are not similar, and
  therefore $h_{L_0}$ and $\langle u\rangle h'_{L_0}$ are
  not similar either. Thus, we are in a position to apply
  Proposition~\ref{prop:sim1} with $L$ instead of $\widehat E$: if
  there exists $\widehat g\in\End_{\widehat Q_L}W_L$ as in the
  statement, then we may find $\lambda_0\in L_0^\times$ and $g$, $g'$ as
  required. (Note that $h'_{L_0}$ and $\langle u\rangle
  h'_{L_0}$ have the same adjoint involution.)
\end{proof}

Iterating the Laurent series construction, we apply
Corollary~\ref{cor:sim1} inductively to the following situation: let
$n\geq2$ be an arbitrary integer, let $Q$ be a central quaternion
division algebra over an arbitrary field $k_0$ of characteristic zero,
and let $q_1$, \ldots, $q_n\in Q$ be nonzero pure quaternions. Let
$a_i=q_i^2\in k_0^\times$. Consider the field of iterated Laurent
series in $n$ indeterminates
\[
k=k_0((t_1))\ldots((t_n))
\]
and the orthogonal involution $\sigma$ on $A=M_n(Q_k)$ adjoint to the
skew-hermitian form
\[
h=\langle t_1q_1,\ldots,t_nq_n\rangle.
\]

\begin{theorem}
  \label{thm:main}
  Suppose the skew-hermitian forms $\langle q_1\rangle$ and $\langle
  q_2\rangle$ are not similar. If $\ell$ is an odd-degree field
  extension of $k$ and $g\in A_\ell$ is such that $\sigma_\ell(g)=g$
  and $g^2=\lambda$ for some $\lambda\in \ell^\times\setminus
  \ell^{\times2}$, then there exists $\mu\in k_0^\times$ such that
  \begin{equation}
    \label{eq:commonslot}
    Q\simeq(a_1,\mu)_{k_0}\simeq\cdots\simeq (a_n,\mu)_{k_0}.
  \end{equation}
\end{theorem}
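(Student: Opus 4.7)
The plan is to apply Corollary~\ref{cor:sim1} inductively, peeling off one indeterminate at a time from the outside in, and then descend Brauer-theoretically to recover a common slot in $k_0$. Setting $k_i = k_0((t_1))\cdots((t_i))$ and $h^{(i)} = \langle t_1 q_1, \ldots, t_i q_i\rangle$, I would, for $j$ running from $n$ down to $2$, decompose $h^{(j)} \simeq h^{(j-1)} \perp \langle t_j\rangle\langle q_j\rangle$ over $k_j = k_{j-1}((t_j))$ and apply Corollary~\ref{cor:sim1}. The form $h^{(j-1)}$ is anisotropic over $k_{j-1}$ by iterated Springer, and it is not similar to $\langle q_j\rangle$ for $j \geq 3$ by rank; for $j = 2$ a valuation argument on reduced norms shows that any similarity of $\langle t_1 q_1\rangle$ and $\langle q_2\rangle$ over $k_1$ would, after absorbing the $t_1$-factor and reducing, yield a similarity of $\langle q_1\rangle$ and $\langle q_2\rangle$ over $k_0$. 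The iteration produces a descending chain $\ell \supset M_{n-1} \supset \cdots \supset M_1$ with each $M_j/k_j$ of odd degree (and $M_{j-1}$ the residue field of $M_j$), scalars $\lambda^{(j)} \in M_j^\times$ with $\lambda \equiv \lambda^{(n-1)} \bmod \ell^{\times 2}$ and $\lambda^{(j)} \equiv \lambda^{(j-1)} \bmod M_j^{\times 2}$, and elements $g'_j \in Q_{M_{j-1}}$ satisfying $q_j g'_j q_j^{-1} = \overline{g'_j}$ and $(g'_j)^2 = \lambda^{(j-1)}$ for $j = 2, \ldots, n$.

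The last level ($j = 1$) I would handle directly, without invoking the corollary: the endomorphism $g_1 \in \End_{Q_{M_1}}(V^{(1)}_{M_1}) = Q_{M_1}$ output by the $j = 2$ step is symmetric for $\ad_{\langle t_1 q_1\rangle}$, and since $t_1$ is central this involution coincides with $\ad_{\langle q_1\rangle}$, so $g'_1 := g_1$ satisfies $q_1 g'_1 q_1^{-1} = \overline{g'_1}$ and $(g'_1)^2 = \lambda^{(1)}$. Because $\lambda \notin \ell^{\times 2}$ forces $\lambda^{(j-1)} \notin M_{j-1}^{\times 2}$ for every $j$, each $g'_j$ is a pure quaternion anticommuting with $q_j$, so $\{1, g'_j, q_j, g'_j q_j\}$ is a quaternion basis giving
\[
Q_{M_{j-1}} \simeq (a_j, \lambda^{(j-1)})_{M_{j-1}} \quad\text{in } \Br(M_{j-1}) \qquad (j = 1, \ldots, n).
\]

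To consolidate these relations over one field, I would use that each $M_j$ is a complete discretely valued field with residue field $M_{j-1}$, so Witt's decomposition
\[
\Br(M_j) \simeq \Br(M_{j-1}) \oplus H^1\bigl(M_{j-1}, \mathbb{Q}/\mathbb{Z}\bigr)
\]
makes $\Br(M_{j-1}) \hookrightarrow \Br(M_j)$ injective. Combined with $\lambda^{(j)} \equiv \lambda^{(j-1)} \bmod M_j^{\times 2}$ and the unramifiedness of $Q$ at each step, this lets every identity $(a_j, \lambda^{(j-1)})_{M_{j-1}} = Q_{M_{j-1}}$ propagate downward to yield $(a_j, \lambda^{(1)})_{M_1} = Q_{M_1}$ for all $j$. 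Writing $M_1 = L_0((\pi))$ with $L_0$ the residue field of $M_1$ (odd-degree over $k_0$), factor $\lambda^{(1)} \equiv \lambda_0 \pi^r \bmod M_1^{\times 2}$ with $\lambda_0 \in L_0^\times$ and $r \in \{0, 1\}$; since $Q_{M_1}$ is unramified and the ramified part of $(a_j, \lambda^{(1)})_{M_1}$ equals $r\cdot\overline{a_j}$ in $L_0^\times/L_0^{\times 2}$, the case $r = 1$ would force every $a_j$ to be a square in $L_0$ and thereby split $Q_{L_0}$, impossible since $L_0/k_0$ is odd-degree. Hence $r = 0$, and injectivity of $\Br(L_0) \hookrightarrow \Br(M_1)$ gives $(a_j, \lambda_0)_{L_0} = Q_{L_0}$ for every $j$. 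Finally, applying $\cor_{L_0/k_0}$ with the projection formula, and using that $[L_0{:}k_0]$ is odd while $[Q]$ is $2$-torsion, yields $(a_j, N_{L_0/k_0}(\lambda_0))_{k_0} = [Q]$ for all $j$, so $\mu := N_{L_0/k_0}(\lambda_0)$ is the desired common slot.

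The main obstacle I anticipate is the Brauer-theoretic bookkeeping through the tower $M_1 \subset \cdots \subset M_{n-1}$: since the scalars $\lambda^{(j)}$ live in different fields, care is needed to ensure that the repeated Witt descents preserve each quaternion relation modulo ramification, and that the final ramification analysis over $M_1$ correctly rules out the $r=1$ case. A second delicate point is the non-similarity check at $j=2$, which does not follow automatically from non-similarity over $k_0$ and requires a dedicated valuation-theoretic argument.
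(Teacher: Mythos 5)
Your proposal is correct and follows the same core strategy as the paper's proof: iterate Corollary~\ref{cor:sim1} from $t_n$ inward to peel off each $\langle q_j\rangle$, obtaining pure quaternions anticommuting with $q_j$ over a descending tower of odd-degree extensions, then recover a common slot in $k_0$ by a Brauer-theoretic descent. The anisotropy and non-similarity checks (including the observation that $\ad_{\langle t_1 q_1\rangle}=\ad_{\langle q_1\rangle}$ and that non-similarity of $\langle q_1\rangle$, $\langle q_2\rangle$ over $k_0$ persists over $k_0((t_1))$ because an element of $k_0^\times$ is a square in $k_0((t_1))$ iff it is a square in $k_0$) are handled correctly, as is the use of $\lambda\notin\ell^{\times2}$ to force each $g'_j$ to be pure.

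The one place where your route genuinely differs from the paper's is the final consolidation. You propagate the relations $(a_j,\lambda^{(j-1)})_{M_{j-1}}\simeq Q_{M_{j-1}}$ \emph{downward} to a single field $M_1$ (using the chained square congruences and the injectivity $\Br(M_1)\hookrightarrow\Br(M_{j-1})$ coming from Witt's decomposition), then perform one ramification analysis over $M_1=L_0((\pi))$ to strip the uniformizer, and finally corestrict $L_0/k_0$. The paper instead extends all the $\lambda_i$ \emph{upward} to $\ell$ (using $\lambda_i\equiv\lambda\bmod\ell^{\times2}$), corestricts once from $\ell$ to $k$, and then uses the square-class decomposition of $k=k_0((t_1))\cdots((t_n))$ (\cite[Ch.~VI, Cor.~1.3]{Lam05}) together with unramifiedness of $Q_k$ to kill the $t_i$-exponents; the descent $k\to k_0$ is then immediate from injectivity of $\Br(k_0)\to\Br(k)$. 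Both endgames are valid; the paper's is a bit more compact because it handles all $n$ uniformizers in a single step, whereas yours localizes the ramification analysis at the bottom of the tower, which has the minor advantage of isolating the use of the two non-square facts ($r=0$ and odd-degree stability of division algebras) more explicitly. One small remark: in your downward propagation, the phrase ``unramifiedness of $Q$ at each step'' is not actually needed -- the split injectivity $\Br(M_{j-1})\hookrightarrow\Br(M_j)$ together with the square congruences is already sufficient.
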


\begin{proof}
  For $i=1$, \ldots, $n$, let $E_i=k_0((t_1))\ldots((t_i))$. Consider
  the following skew-hermitian forms over $Q_{E_i}$:
  \[
  h_{i}=\langle t_1q_1,\ldots,t_{i}q_{i}\rangle
  \quad\text{and}\quad h'_{i}=\langle q_{i+1}\rangle \qquad\text{for
    $i=1$, \ldots, $n-1$.} 
  \]
  Let $V_i$ and $V'_i$ be the $Q_{E_i}$-vector spaces underlying $h_i$
  and $h'_i$ respectively.
  The form $h'_{i}$ is clearly anisotropic, and
  Proposition~\ref{prop:sim1}(1) applied inductively shows that
  $h_{i}$ is anisotropic. If $i=1$ the forms $h_{i}$ and $h'_{i}$
  are not similar by hypothesis; if $i\geq2$ they are not similar
  because they do not have the same dimension. 

  Suppose $\ell$ is an odd-degree field extension of $k=E_n$ and $g\in
  A_\ell$ is as in the statement of the
  theorem. Corollary~\ref{cor:sim1} (with $L=\ell$, $E=E_{n-1}$, and
  $\widehat E= E_n$) yields an odd-degree field extension $\ell_{n-1}$
  of $E_{n-1}$ contained in $\ell$, a scalar
  $\lambda_{n-1}\in\ell_{n-1}^\times$ such that
  $\lambda_{n-1}\equiv\lambda\bmod \ell^{\times2}$ and maps
  $g_{n-1}\in\End_{Q_{\ell_{n-1}}}(V_{n-1})_{\ell_{n-1}}$,
  $g'_{n-1}\in\End_{Q_{\ell_{n-1}}}(V'_{n-1})_{\ell_{n-1}}$, symmetric
  under $\ad(h_{n-1})$ and $\ad(h'_{n-1})$ respectively, such that
  $g_{n-1}^2={g'}_{n-1}^2=\lambda_{n-1}$. Applying again
  Corollary~\ref{cor:sim1} (with $L=\ell_{n-1}$ and $\widehat
  g=g_{n-1}$), we 
  obtain an odd-degree field extension $\ell_{n-2}$ of $E_{n-2}$
  contained in $\ell_{n-1}$, a scalar
  $\lambda_{n-2}\in\ell_{n-2}^\times$ such that
  $\lambda_{n-2}\equiv\lambda_{n-1}\bmod \ell_{n-1}^{\times2}$ and
  $g_{n-2}\in \End_{Q_{\ell_{n-2}}}(V_{n-2})_{\ell_{n-2}}$,
  $g'_{n-2}\in \End_{Q_{\ell_{n-2}}}(V'_{n-2})_{\ell_{n-2}}$,
  symmetric under $\ad(h_{n-2})$ and $\ad(h'_{n-2})$, such that
  $g_{n-2}^2={g'}_{n-2}^2=\lambda_{n-2}$. Repeating the procedure as
  many times as needed, we finally have field extensions
  \[
  \ell_1\subset \ell_2\subset\cdots\subset\ell_{n-1}
  \subset\ell_n=\ell,
  \]
  scalars $\lambda_i\in\ell_i^\times$ for $i=1$, \ldots, $n-1$ and
  $\lambda_n=\lambda$ such that
  \begin{equation}
    \label{eq:lambda}
    \lambda_i\equiv\lambda_{i+1} \bmod\ell_{i+1}^{\times2}
    \qquad\text{for $i=1$, \ldots, $n-1$,}
  \end{equation}
  and maps 
  \[
  g_1\in\End_{Q_{\ell_1}}(V_1)_{\ell_1},\quad 
  g'_{i}\in\End_{Q_{\ell_{i}}}(V'_{i})_{\ell_{i}}\qquad\text{for
    $i=1$, \ldots, $n-1$},
  \]
  each
  symmetric under the adjoint involution of the corresponding
  skew-hermitian form, such that
  \[
  g_1^2=\lambda_1 \quad\text{and}\quad {g'}_i^2=\lambda_i
  \quad\text{for $i=1$, \ldots, $n-1$}.
  \]
  Note that $V_1$, $V'_1$, $V'_2$, \ldots, $V'_{n-1}$ are
  $1$-dimensional, hence using bases we may identify
  $\End_{Q_{\ell_1}}(V_1)_{\ell_1}=Q_{\ell_1}$ and
  $\End_{Q_{\ell_i}}(V'_i)_{\ell_i}=Q_{\ell_i}$ for $i=1$, \ldots,
  $n-1$. From~\eqref{eq:lambda} it follows that
  $\lambda_i\equiv\lambda\mod\ell^{\times2}$, hence
  $\lambda_i\notin\ell_i^{\times2}$ for $i=1$, \ldots,
  $n-1$. Therefore, $g_1$, $g'_1$, \ldots, $g'_{n-1}$ are pure
  quaternions. The condition that $g_1$ is symmetric under $\ad(h_1)$
  then means that $g_1q_1=-q_1g_1$, hence $q_1$, $g_1$ are part of a
  quaternion base of $Q_{\ell_1}$ and therefore
  \[
  Q_{\ell_1}\simeq(a_1,\lambda_1)_{\ell_1}.
  \]
  Similarly, because $g'_i$ is symmetric under $\ad(h'_i)$ we have
  \[
  Q_{\ell_i}\simeq(a_{i+1},\lambda_i)_{\ell_i} \qquad\text{for $i=1$,
    \ldots, $n-1$.}
  \]
  Extending scalars to $\ell$ and using
  $\lambda_i\equiv\lambda\bmod\ell^{\times2}$ for $i=1$, \ldots,
  $n-1$, we see that
  \[
  Q_\ell\simeq (a_1,\lambda)_\ell\simeq\cdots\simeq
  (a_n,\lambda)_\ell.
  \]
  Taking the corestriction from $\ell$ to $k$, we obtain since
  $[\ell:k]$ is odd
  \begin{equation}
    \label{eq:fin}
    Q_k\simeq (a_1,N_{\ell/k}(\lambda))_k\simeq\cdots\simeq
    (a_n,N_{\ell/k}(\lambda))_k. 
  \end{equation}
  Recall from \cite[Ch.~VI, Cor.~1.3]{Lam05} that each element in
  $k^\times$ is in the 
  coset of some monomial $t_1^{\varepsilon_1}\ldots
  t_n^{\varepsilon_n}$ with each $\varepsilon_i\in\{0,1\}$ modulo
  $k_0^\times k^{\times2}$, hence we may find $\mu\in k_0^\times$ and
  $\varepsilon_1$, \ldots, $\varepsilon_n\in\{0,1\}$ such that
  \[
  N_{\ell/k}(\lambda)\equiv \mu t_1^{\varepsilon_1} \ldots
  t_n^{\varepsilon_n} \bmod k^{\times2}.
  \]
  But the Brauer class of $Q_k$ is unramified for the $(t_1,\ldots,
  t_n)$-adic valuation on $k$, hence we must have
  $\varepsilon_1=\cdots=\varepsilon_n=0$. From~\eqref{eq:fin}, it
  follows that $\mu$
  satisfies~\eqref{eq:commonslot}. 
\end{proof}

\begin{corollary}
  \label{cor:main}
  With the same hypotheses and notation as in Theorem~\ref{thm:main},
  suppose there does not exist any $\mu\in k_0^\times$
  satisfying~\eqref{eq:commonslot}. Then $S(A,\sigma)\subset
  G^+(A,\sigma)$. 
\end{corollary}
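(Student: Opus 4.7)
The plan is to show that every generator $N_{\ell/k}(\lambda)$ of $S(A,\sigma)$ — that is, a norm coming from a finite-degree field extension $\ell/k$ together with some $g\in A_\ell$ satisfying $\sigma_\ell(g)=g$ and $g^2=\lambda$ — lies in $G^+(A,\sigma)$. The two equations on $g$ combine to give $\sigma_\ell(g)g=\lambda$, so $g\in\GO(A_\ell,\sigma_\ell)$ with multiplier $\lambda\in G(A_\ell,\sigma_\ell)$. I would split the argument into cases according to the parity of $[\ell:k]$ and according to whether $\lambda$ lies in $\ell^{\times2}$.

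The first two cases are routine. If $\lambda\in\ell^{\times2}$, then $N_{\ell/k}(\lambda)\in k^{\times2}$, and every nonzero scalar $c\in k^\times$ defines a similitude with multiplier $c^2$ and $\Nrd(c)=c^{\deg A}=(c^2)^{(\deg A)/2}$, so $c$ is proper; thus $k^{\times2}\subset G^+(A,\sigma)$. If $[\ell:k]$ is even, then Lemma~\ref{lem:normult} yields $N_{\ell/k}(\lambda)\in G^+(A,\sigma)$ regardless of whether $g$ is a proper or an improper similitude of $(A_\ell,\sigma_\ell)$.

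The only case that remains is $[\ell:k]$ odd with $\lambda\in\ell^\times\setminus\ell^{\times2}$. Here the running hypotheses of Theorem~\ref{thm:main} are in force — in particular, the non-similarity of $\langle q_1\rangle$ and $\langle q_2\rangle$ is part of the standing assumption inherited from that theorem — so Theorem~\ref{thm:main} produces an element $\mu\in k_0^\times$ satisfying \eqref{eq:commonslot}. This directly contradicts the hypothesis of the corollary, so this case cannot occur. Combining the cases shows $N_{\ell/k}(\lambda)\in G^+(A,\sigma)$ for every generator, yielding the desired inclusion.

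The main substantive content — and the only nontrivial obstacle — is already contained in the contrapositive of Theorem~\ref{thm:main}, whose inductive proof via iterated application of Corollary~\ref{cor:sim1} and the Parimala–Sridharan–Suresh/Lewis transfer results was the heart of the preceding work. Granted that theorem, the present corollary reduces to a short case analysis on the parity of $[\ell:k]$ and a single invocation of Lemma~\ref{lem:normult}.
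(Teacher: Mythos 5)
Your proof is correct, and it organizes the argument differently from the paper's. The paper first splits on $\deg A\bmod 4$: for $\deg A\equiv0\bmod4$ it simply quotes Corollary~\ref{cor:orth} (where a square-central symmetric element is automatically a \emph{proper} similitude, so no hypothesis on $Q$ is needed), and for $\deg A\equiv2\bmod4$ it observes that $g$ is an \emph{improper} similitude, deduces $\lambda\notin\ell^{\times2}$ from the non-splitness of $A_\ell$ via~\eqref{eq:propsimnothing}, concludes from Theorem~\ref{thm:main} and the hypothesis on $\mu$ that $[\ell:k]$ must be even, and finishes with Lemma~\ref{lem:normult}. You instead split directly on whether $\lambda\in\ell^{\times2}$ and on the parity of $[\ell:k]$, which lets you bypass both the properness analysis and the splitness of $A_\ell$: squares are absorbed into $k^{\times2}\subset G^+(A,\sigma)$, even-degree extensions are handled by Lemma~\ref{lem:normult} uniformly in the proper and improper cases, and the one remaining case (odd degree, non-square $\lambda$) is precisely the configuration that Theorem~\ref{thm:main} rules out under the hypothesis of the corollary. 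Both routes rest on the same two essential inputs (Theorem~\ref{thm:main} and Lemma~\ref{lem:normult}); yours is slightly more uniform and self-contained, while the paper's version records the extra information that for $\deg A\equiv0\bmod4$ the inclusion holds with no hypothesis on $Q$ whatsoever.
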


\begin{proof}
  If $\deg A\equiv0\bmod4$, the inclusion holds without any hypothesis
  on $Q$ by Corollary~\ref{cor:orth}. For the rest of the proof, we
  may thus assume $\deg A\equiv2\bmod4$, which means that $n$ is
  odd. Let $\ell$ be a finite-degree field extension of $k$ and
  $\lambda\in\ell^\times$ such that $g^2=\lambda$ for some
  $\sigma_\ell$-symmetric $g\in A_\ell$. As $\lambda=\sigma_\ell(g)g$
  and $\Nrd_{A_\ell}(g)=(-\lambda)^n$, it follows that $g$ is an
  improper similitude, hence $\lambda\in
  G^-(A_\ell,\sigma_\ell)$. Since $A_\ell$ is not split we must have
  $\lambda\notin\ell^{\times2}$, hence Theorem~\ref{thm:main} shows
  that $[\ell:k]$ is even because there is no $\mu\in k_0^\times$
  satisfying~\eqref{eq:commonslot}. Lemma~\ref{lem:normult} then
  yields $N_{\ell/k}(\lambda)\in G^+(A,\sigma)$.
\end{proof}

\begin{example}
  \label{ex:main}
  Let $k_0=k_*(a_1,a_2)$ be the field of rational functions in two
  indeterminates over an arbitrary field $k_*$ of characteristic
  zero. The quaternion algebra $Q=(a_1,a_2)_{k_0}$ contains pure
  quaternions $q_1$, $q_2$, $q_3$ satisfying
  \[
  q_1^2=a_1,\quad q_2^2=a_2,\quad
  q_3^2=a_1\bigl((1-a_1)^2(1+a_2)^2-4(1-a_1)a_2\bigr),
  \]
  see \cite[Ex.~3.12]{QMT1}. Let $a_3=q_3^2$. It is shown in
  \cite[Ex.~3.12]{QMT1} that there is no $\mu\in k_0^\times$ such that
  $Q\simeq(a_1,\mu)_{k_0}\simeq(a_2,\mu)_{k_0}\simeq(a_3,\mu)_{k_0}$. Note
  that the forms $\langle q_1\rangle$ and $\langle q_2\rangle$ are not
  similar since they do not have the same discriminant. Therefore, for
  arbitrary $n\geq3$ the construction before Theorem~\ref{thm:main}
  with $q_3=q_4=\cdots=q_n$ yields by Corollary~\ref{cor:main} an
  algebra with orthogonal involution $(A,\sigma)$ of degree~$2n$ such
  that $S(A,\sigma)\subset G^+(A,\sigma)$. For the completion
  $(\wB,\wtau)$ of the generic unitary
  extension $(B,\tau)$ as in \eqref{eq:Btau} we then have by
  \eqref{eq:MerkPGU/R} and Theorem~\ref{thm:PGU/R} a canonical
  surjective map $\varphi\colon \PGU(\wB,\wtau)/R\to
  G(A,\sigma)/G^+(A,\sigma)$ as in Corollary~\ref{cor:orth}. If $n$ is
  odd and $-1\in k_*^{\times2}$, it is shown in \cite[Cor.~3.13]{QMT1}
  that $(A,\sigma)$ admits improper similitudes. Since $A$ is not
  split it follows that $G(A,\sigma)\neq G^+(A,\sigma)$ (see
  \eqref{eq:propsimnothing}), hence 
  $\PGU(\wB,\wtau)/R\neq1$. Therefore, the group $\gPGU(B,\tau)$ is
  not $R$-trivial since $\gPGU(B,\tau)(\wF)=\PGU(\wB,\wtau)$. Note
  that the field of definition of $\gPGU(B,\tau)$ is the field
  $k_*(a_1,a_2,x)$ of rational functions in three variables over an
  arbitrary field of characteristic zero.
\end{example}


\end{document}